\date{\today}
\newcommand{\fC}{{\mathfrak C}}
\newcommand{\cC}{{\mathcal C}}
\newcommand{\bN}{{\mathbf{N}}}
\newcommand{\bR}{{\mathbf{R}}}
\newcommand{\bZ}{{\mathbf{Z}}}
\newcommand\Lasc{{\mathrm{L}}}
\newcommand{\equivLasc}{\mathrel{\equiv_{\Lasc}}}
\newcommand{\restr}{\mathord{\upharpoonright}}
\newcommand{\biglor}{\bigvee}
\newcommand{\eq}{\mathrm{eq}}
\newcommand{\Sk}{\mathrm{Sk}}
\DeclareMathOperator{\tp}{{tp}}
\DeclareMathOperator{\acl}{{acl}}
\DeclareMathOperator{\dcl}{{dcl}}
\DeclareMathOperator{\Th}{{Th}}
\DeclareMathOperator{\Gal}{{Gal}}
\DeclareMathOperator{\id}{{id}}
\DeclareMathOperator{\Aut}{{Aut}}
\DeclareMathOperator{\SL}{{SL}}
\DeclareMathOperator{\Lascd}{{d_\Lasc}}
\newcommand{\forkindep}[1][]{%
	\mathrel{
		\mathop{
			\vcenter{
				\hbox{\oalign{\noalign{\kern-.3ex}\hfil$\vert$\hfil\cr
						\noalign{\kern-.7ex}
						$\smile$\cr\noalign{\kern-.3ex}}}
			}
		}\displaylimits_{#1}
	}
}
\newtheorem*{mainthm*}{Main Theorem}
\newtheorem{thm}{Theorem}[section]
\newtheorem{lem}[thm]{Lemma}
\newtheorem{fct}[thm]{Fact}
\newtheorem{cor}[thm]{Corollary}
\newtheorem{prop}[thm]{Proposition}
\newtheorem{qu}[thm]{Question}
\newtheorem{con}[thm]{Conjecture}
\theoremstyle{remark}
\newtheorem{rem}[thm]{Remark}
\theoremstyle{definition}
\newtheorem{dfn}[thm]{Definition}
\newtheorem*{clm*}{Claim}
\newtheorem{ex}[thm]{Example}
\newcounter{claimcounter}[thm]
\newenvironment{clmproof}[1][\proofname]{\proof[#1]}{\endproof}
\newcommand{\xqed}[1]{%
	\leavevmode\unskip\penalty9999 \hbox{}\nobreak\hfill
	\quad\hbox{\ensuremath{#1}}}
\renewcommand*\author[1]{%
	\stepcounter{author}%
	\ifnum\c@author=1
	\gdef\@author{#1}%
	\else
	\xdef\@author{\unexpanded\expandafter{\@author\and#1}}%
	\fi
	\csgdef{author@\the\c@author}{#1}}
\newcommand*\grant[1]{%
	\xdef\@author{\unexpanded\expandafter{\@author\footnote{#1}}}
}
\newcommand*\email[1]{%
	\csgdef{email@\the\c@author}{#1}}
\newcommand*\orcid[1]{%
	\csgdef{orcid@\the\c@author}{#1}}
\newcommand*\address[1]{%
	\csgdef{address@\the\c@author}{#1}}
	\xdef\author@count{\the\c@author}%
\newcommand*\print@authors{%
	\ifnum\c@author>\author@count
	\else
	\print@author{\the\c@author}%
	\advance\c@author by 1
	\expandafter\print@authors
	\fi}
\newcommand*\print@author[1]{%
	\par\medskip
	\begin{tabular}{@{}l@{}}%
		\textsc{\csuse{author@#1}}\\
		\csuse{address@#1}\\
		\href{\csuse{orcid@#1}}{\includegraphics[height=\fontcharht\font`\B]{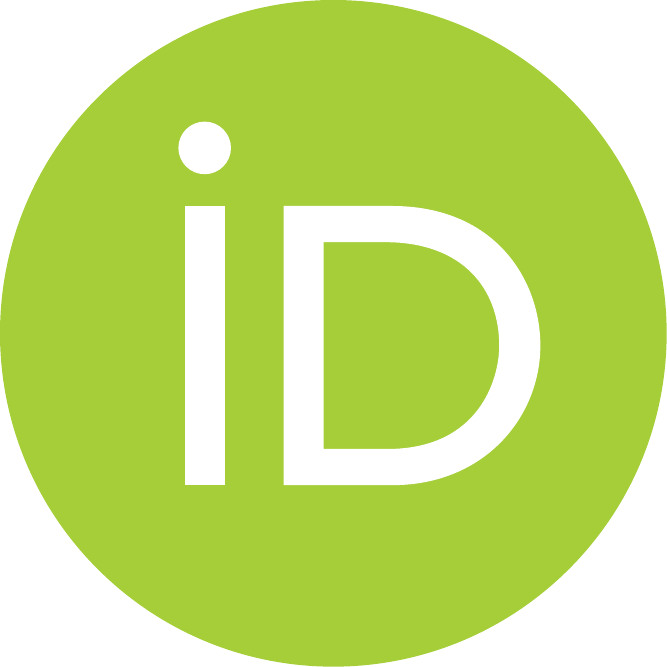} \csuse{orcid@#1}}\\
		\href{mailto:\csuse{email@#1}}{\csuse{email@#1}}
\end{tabular}}
\newcommand{\subjclass}[2][1991]{%
	\let\@oldtitle\@title%
	\gdef\@title{\@oldtitle\footnotetext{#1 \emph{Mathematics subject classification.} #2}}%
}
\newcommand{\keywords}[1]{%
	\let\@@oldtitle\@title%
	\gdef\@title{\@@oldtitle\footnotetext{\emph{Key words and phrases.} #1.}}%
}
\title{Hereditary G-compactness}
\subjclass[2010]{03C45, 03C64, 03C30, 06A05}
\keywords{G-compactness, linear order, NSOP, Lascar strong type}
\author{Tomasz Rzepecki}
\address{The Hebrew University of Jerusalem and Uniwersytet Wrocławski}
\email{tomasz.rzepecki@math.uni.wroc.pl}
\begin{document}
	\maketitle

	\begin{abstract}
		We introduce the notion of hereditary G-compactness (with respect to interpretation). We provide a sufficient condition for a poset to not be hereditarily G-compact, which we use to show that any linear order is not hereditarily G-compact. Assuming that a long-standing conjecture about unstable NIP theories holds, this implies that an NIP theory is hereditarily G-compact if and only if it is stable (and by a result of Simon, this holds unconditionally for $\aleph_0$-categorical theories). We show that if $G$ is definable over $A$ in a hereditarily G-compact theory, then $G^{00}_A=G^{000}_A$.
		We also include a brief survey of sufficient conditions for G-compactness, with particular focus on those which can be used to prove or disprove hereditary G-compactness for some (classes of) theories.
	\end{abstract}
	
	The notion of G-compactness (see Definition~\ref{dfn:g_comp}) was originally introduced by Lascar in his paper \cite{Las82}. It is related to the so-called Lascar strong types and Galois groups of first order theories, which are important objects of study in contemporary model theory.
	
	G-compactness a generalisation of stability and simplicity (see Fact~\ref{fct:stable_simple_gc}), so it is a tameness-like property of a first-order theory. Unfortunately, unlike these two properties, it is not preserved by interpretation, and not even by reducts and adding constants.
	
	In this paper, we introduce a stronger property of \emph{hereditary G-compactness} (Definition~\ref{dfn:hered_g_comp}) which is (by definition) preserved under interpretation, and is thus, in some ways, much more well-behaved as a model-theoretic property.
	
	An interesting consequence of hereditary G-compactness is that for a group $G$ definable in a hereditary G-compact theory, we have, for every small $A$, $G^{00}_A=G^{000}_A$ (see Proposition~\ref{prop:def_group}). It seems likely that it could also imply other similar type-definability results.
	
	The main result is the following.
	
	\begin{mainthm*}[Theorem~\ref{thm:lin_not_hgc}]
		Let $T$ be any theory such that $T$ interprets an infinite linear order. Then $T$ is not hereditarily G-compact.\xqed{\lozenge}
	\end{mainthm*}
	
	In order to prove the main theorem, we generalise \cite{CLPZ01} (where the authors gave the first example of a non-G-compact theory) and use this generalisation to describe (via Theorem~\ref{thm:wts_not_hgc}) a wide class of posets (including all infinite linear orders) whose theories are not hereditarily G-compact (see Theorem~\ref{thm:lin_not_hgc}).
	
	The Main~Theorem implies that, modulo a long-standing Conjecture~\ref{con:unstable_nip_linear}, an NIP theory is stable if and only if it is hereditarily G-compact. In the other direction, it seems plausible that all NSOP$_1$ theories are hereditarily G-compact (see Fact~\ref{fct:gcomp_nsop1} and the surrounding discussion), closely tied to the NSOP (cf.\ Question~\ref{qu:SOP_nhgc}).
	
	Unfortunately, obtaining positive results (namely, proving hereditary G-compactness) seems to be rather difficult, and typically involves some deep results. There seems to be no obvious way to do this even in specific examples, but at least a partial result seems possible in the case of $\aleph_0$-categorical structures with q.e.\ in a finite relational language (cf.\ Question~\ref{qu:henson} and the preceding discussion). Likewise, proving lack of (hereditary) G-compactness for even particular NSOP theories seems to be a hard problem, since essentially all understood examples of NSOP theories appear to be G-compact for very general reasons (see Example~\ref{ex:canonical_jep} and Remark~\ref{rem:existence_for_nsop1}).
	
	The outline of the paper is as follows. In Section~\ref{sec:classical}, we briefly recall the classical notions of Lascar distance and G-compactness and the basic relevant facts concerning them. In Section~\ref{sec:hgc}, we introduce the notion of hereditary G-compactness, provide some examples and discuss the relationship with the connected group components. In Section~\ref{sec:thrsp}, we introduce some technical notions necessary to prove the main theorem. In Section~\ref{sec:main_proof} we prove the main theorem. In Section~\ref{sec:criteria} we survey some of the known sufficient conditions for G-compactness, and how (and whether) they might be used to prove hereditary G-compactness. Finally, in Section~\ref{sec:open}, we list some open problems.
	
	\section{Lascar distance, G-compactness}
	\label{sec:classical}
	
	\begin{dfn}
		Given a monster model $\fC$ and two (possibly infinite) small tuples $a,b\in \fC$, we say that $\Lascd(a,b)\leq n$ if there are sequences $a=a_0,a_1,\ldots,a_n=b$ and $M_1,\ldots, M_n$ such that for $i=1,\ldots,n$ we have $M_i\preceq \fC$ and $a_{i-1}\equiv_{M_i} a_i$.
		
		The \emph{Lascar distance} $\Lascd$ between $a$ and $b$ is the smallest natural number $n$ such that $\Lascd(a,b)\leq n$, or $\infty$ if no such $n$ exists.
		
		We say that $a,b$ are \emph{Lascar equivalent} or have the same \emph{Lascar strong type}, written $a\equivLasc b$, if $\Lascd(a,b)<\infty$.
		
		The \emph{Lascar strong type of $a$} is its $\equivLasc$-class.
		\xqed{\lozenge}
	\end{dfn}
	
	\begin{rem}[Lascar graph]
		\label{rem:Lascar_graph}
		Another way to describe the Lascar distance $\Lascd$ is to say that it is the distance in the undirected graph $(V,E)$ (called the Lascar graph), where $V$ is the set of all small tuples in $\fC$ and $E$ is the set of all pairs $a,b$ such that for some $M\preceq \fC$ we have $\tp(a/M)=\tp(b/M)$.
		
		The Lascar graph (and distance) is also often defined in terms of indiscernible sequence. Namely, we declare that an edge exists between $a$ and $b$ if $(a,b)$ can be extended to an infinite indiscernible sequence. The resulting distance function is bi-Lipschitz equivalent to $\Lascd$ given above (so the corresponding $\equivLasc$ is the same).\xqed{\lozenge}
	\end{rem}

	
	
	\begin{dfn}
		\label{dfn:g_comp}
		We say that a theory $T$ is G-compact if every Lascar strong type has finite diameter, i.e.\ for every tuple $a$ we have an integer $n$ such that $\Lascd(a,b)<\infty$ implies that $\Lascd(a,b)\leq n$.
		
		We say that a theory $T$ is \emph{$n$-G-compact} if for any tuples $a,b$ we have that if $\Lascd(a,b)<\infty$, then $\Lascd(a,b)\leq n$. (Or equivalently, if $\Lascd(a,b)\leq n+1$ implies $\Lascd(a,b)\leq n$.)\xqed{\lozenge}
	\end{dfn}

	\begin{rem}
		The relation $\equivLasc$ has many equivalent definitions. Among others, it is the finest bounded invariant equivalence relation. However, in this paper, we will only really use the definition provided above.\xqed{\lozenge}
	\end{rem}
	
	The canonical example of a non-G-compact theory has been described in \cite{CLPZ01}. It consists of a structure with infinitely many disjoint sorts $M_n$, with $M_n$ being $n$-G-compact, but not $(n-1)$-G-compact (which is enough for non-G-compactness by Fact~\ref{fct:gcom_equivalent}). We will imitate this construction to prove the main theorem.
	
	\begin{prop}
		\label{prop:ext1}
		If $a\equivLasc a'$ and $b$ is arbitrary, then there is some $b'$ such that $ab\equivLasc a'b'$.
	\end{prop}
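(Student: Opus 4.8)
The plan is to reduce the statement to a single-step version and then chain the single steps along a Lascar chain. The key single-step observation is the following: if $c \equiv_M c'$ for some model $M \preceq \fC$ and $d$ is an arbitrary small tuple, then there is $d'$ with $cd \equiv_M c'd'$. Granting this, I would argue as follows. Since $a \equivLasc a'$, we have $\Lascd(a,a') \leq n$ for some $n$, witnessed by a chain $a = a_0, a_1, \ldots, a_n = a'$ and models $M_1, \ldots, M_n \preceq \fC$ with $a_{i-1} \equiv_{M_i} a_i$ for each $i$. Setting $b_0 = b$ and applying the single-step observation repeatedly (with $c = a_{i-1}$, $c' = a_i$, and $d = b_{i-1}$), I obtain tuples $b_1, \ldots, b_n$ such that $a_{i-1} b_{i-1} \equiv_{M_i} a_i b_i$ for each $i$. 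Then the chain $a_0 b_0, \ldots, a_n b_n$ together with $M_1, \ldots, M_n$ witnesses $\Lascd(ab, a'b') \leq n$ with $b' = b_n$, so in particular $ab \equivLasc a'b'$.

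For the single-step observation, I would invoke strong homogeneity of the monster model $\fC$. The hypothesis $c \equiv_M c'$ says exactly that $\tp(c/M) = \tp(c'/M)$, so the map fixing $M$ pointwise and sending $c \mapsto c'$ is elementary and hence extends to some $\sigma \in \Aut(\fC/M)$. Putting $d' = \sigma(d)$, the automorphism $\sigma$ fixes $M$ pointwise and sends $cd$ to $c'd'$, whence $cd \equiv_M c'd'$, as required.

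I expect essentially no obstacle here; the only point worth noting is that the tuples $a, a', b$ may be infinite, but strong homogeneity applies to all small tuples, so the argument goes through unchanged. Conceptually, the whole proof amounts to observing that $\equivLasc$ is generated by chains of ``same type over a model'' steps, each of which is realised by an automorphism fixing that model, and that such automorphisms can be used to transport the extra tuple $b$ along the chain step by step.
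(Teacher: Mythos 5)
Your proof is correct and is essentially the paper's own argument: the paper likewise converts the Lascar chain into automorphisms $\sigma_i \in \Aut(\fC/M_i)$ with $a' = \sigma_n\cdots\sigma_1(a)$ and takes $b' = \sigma_n\cdots\sigma_1(b)$, which is exactly your $b_n$. The only difference is presentational — you make explicit the homogeneity step (extending each elementary map over $M_i$ to an automorphism) that the paper compresses into ``by definition.''
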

	\begin{proof}
		By definition, there is a finite sequence $M_1,\ldots,M_n$ of models and automorphisms $\sigma_i\in \Aut(\fC/M_i)$ such that $a'=\sigma_n\ldots\sigma_1(a)$. Then $b'=\sigma_n\ldots\sigma_1$ is as described.
	\end{proof}
	
	\begin{prop}
		\label{prop:ext2}
		If $a,a'$ and $b,b'$ are pairs of tuples of the same length, then $\Lascd(a,a')\leq \Lascd(ab,a'b')$.
	\end{prop}
	\begin{proof}
		If $\Lascd(ab,a'b')\leq n<\infty$, this is witnessed by a sequence of $n$ models and a sequence of automorphisms fixing the respective models. The same sequence witnesses that $\Lascd(a,a')\leq n$.
	\end{proof}
	
	The following fact is folklore.
	\begin{fct}
		\label{fct:gcom_equivalent}
		The following are equivalent.
		\begin{enumerate}
			\item 
			$T$ is G-compact.
			\item 
			For some $n$, one of the following (equivalent) conditions holds:
			\begin{itemize}
				\item
				$T$ is $n$-G-compact for some $n$.
				\item
				For all \emph{finite} tuples $a,b$, if $\Lascd(a,b)\leq n+1$, then $\Lascd(a,b)\leq n$.
			\end{itemize}
		\end{enumerate}
	\end{fct}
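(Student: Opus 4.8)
The plan is to prove the two non-trivial implications separately, treating $(2)\Rightarrow(1)$ and the equivalence of the two bullet points in $(2)$ as the routine part, and isolating the uniformity statement $(1)\Rightarrow(2)$ as the real content. First, $(2)\Rightarrow(1)$ is immediate: if $T$ is $n$-G-compact then every $\equivLasc$-class has diameter at most $n$, so in particular each has finite diameter. For the equivalence of the two conditions inside $(2)$, the key observation is that $\Lascd$ is subadditive, $\Lascd(a,c)\le \Lascd(a,b)+\Lascd(b,c)$, which follows at once by concatenating the witnessing chains of models (exactly as in the proof of Proposition~\ref{prop:ext2}). The direction ``$n$-G-compact $\Rightarrow$ the finite-tuple condition'' is trivial. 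Conversely, assume that for all finite $a,b$ we have $\Lascd(a,b)\le n+1\Rightarrow\Lascd(a,b)\le n$. Given finite $a,b$ with $\Lascd(a,b)=m<\infty$ and $m>n$, I would fix a chain $a=c_0,\dots,c_m=b$ realizing the distance and peel off one edge at a time: $\Lascd(c_0,c_{n+1})\le n+1$ gives $\Lascd(c_0,c_{n+1})\le n$, then $\Lascd(c_0,c_{n+2})\le\Lascd(c_0,c_{n+1})+1\le n+1$ gives $\le n$, and so on until $\Lascd(c_0,c_m)\le n$. This upgrades the hypothesis to ``$\Lascd(a,b)<\infty\Rightarrow\Lascd(a,b)\le n$'' for all finite tuples.

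The heart of the matter is $(1)\Rightarrow(2)$: passing from a per-class (non-uniform) diameter bound to a single $n$. Here the idea is to extract the uniform bound for finite tuples from the diameter of one cleverly chosen small tuple. Fix a small $\aleph_0$-saturated $M\preceq\fC$ and let $\bar m$ enumerate $M$; by G-compactness the $\equivLasc$-class of $\bar m$ has some finite diameter $N$. I claim $N$ works for all finite tuples. Indeed, given finite $c\equivLasc d$: since $M$ is $\aleph_0$-saturated, choose $a_0\in M$ with $a_0\equiv c$, and by homogeneity of $\fC$ some $\tau\in\Aut(\fC)$ with $\tau(c)=a_0$; set $b_0:=\tau(d)$. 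As $\Lascd$ and $\equivLasc$ are $\Aut(\fC)$-invariant, $\Lascd(c,d)=\Lascd(a_0,b_0)$ and $a_0\equivLasc b_0$, so $b_0=\rho(a_0)$ for some $\rho\in\Autf(\fC)$ (as in the proof of Proposition~\ref{prop:ext1}). Now $a_0$ is a subtuple of $\bar m$ and $b_0=\rho(a_0)$ is the corresponding subtuple of $\rho(\bar m)$, so Proposition~\ref{prop:ext2} gives $\Lascd(a_0,b_0)\le\Lascd(\bar m,\rho(\bar m))$; and since $\rho\in\Autf(\fC)$ we have $\rho(\bar m)\equivLasc\bar m$, whence $\Lascd(\bar m,\rho(\bar m))\le N$. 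Chaining these inequalities yields $\Lascd(c,d)\le N$.

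It remains to remove the finiteness restriction, i.e.\ to deduce $N$-G-compactness for all tuples. For this I would invoke the standard fact --- provable from the indiscernible-sequence description of the Lascar graph in Remark~\ref{rem:Lascar_graph} together with compactness --- that each ball $\{(x,y):\Lascd(x,y)\le N\}$ is type-definable over $\emptyset$, coherently across lengths. Given arbitrary $a\equivLasc b$, say $b=\sigma(a)$ with $\sigma\in\Autf(\fC)$, every finite subpair $(a\restr F,\sigma(a)\restr F)$ is a pair $c\equivLasc d$ with $\Lascd(c,d)\le N$ by the previous paragraph; type-definability of the ball then forces $\Lascd(a,b)\le N$ as well, so $T$ is $N$-G-compact. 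I expect this last passage --- the uniform bound together with its extension from finite to infinite tuples --- to be the main obstacle, since it is exactly where a genuine compactness argument is unavoidable, whereas the subadditivity and $\Aut(\fC)$-invariance used earlier are purely formal.
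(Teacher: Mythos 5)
Your argument is correct, but for the only non-routine implication, $(1)\Rightarrow(2)$, you take a genuinely different route from the paper. The paper argues by contraposition and a diagonal argument: if for every $n$ there are tuples $a_n\equivLasc b_n$ with $\Lascd(a_n,b_n)>n$, then Proposition~\ref{prop:ext1} produces, for each $m$, a tuple $(b^m_n)_n$ with $b^m_m=b_m$ and $(a_n)_n\equivLasc (b^m_n)_n$, and Proposition~\ref{prop:ext2} gives $\infty>\Lascd((a_n)_n,(b^m_n)_n)>m$, so the single Lascar strong type of the concatenated tuple $(a_n)_n$ has infinite diameter. You argue directly instead: the enumeration $\bar m$ of a small $\aleph_0$-saturated model serves as a universal tuple, and its class diameter $N$ bounds $\Lascd(c,d)$ for every Lascar-equivalent pair of finite tuples, via homogeneity, $\Aut(\fC)$-invariance, Lascar strong automorphisms, and Proposition~\ref{prop:ext2}. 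Both are sound. The paper's version is more self-contained for this direction --- it uses nothing beyond Propositions~\ref{prop:ext1} and~\ref{prop:ext2}, and needs no finite-to-infinite transfer there, since the contradiction with G-compactness is exhibited on an infinite tuple outright --- whereas yours is direct rather than contrapositive, produces an explicit bound, and in effect proves the sharper characterisation recorded in the unnumbered remark following this Fact (G-compactness is equivalent to the class of a model enumeration having finite diameter). The remaining ingredients coincide up to ordering: your subadditivity-plus-peeling induction and the compactness passage from finite to infinite tuples are exactly the paper's two steps for the internal equivalence in (2), performed in the opposite order (the paper transfers to infinite tuples first, then does the induction). The one point where you lean on an unproved ingredient is the type-definability of the balls $\Lascd(x,y)\le N$ ``coherently across lengths''; this is standard, and the paper itself invokes type-definability of the balls without proof (in Proposition~\ref{prop:finite_sublanguage}), but if you want it self-contained, note that a chain witnessing $\Lascd(a\restr F,b\restr F)\le N$ extends, exactly as in the proof of Proposition~\ref{prop:ext1}, to automorphisms moving all of $a$, yielding $b'$ with $b'\restr F=b\restr F$ and $\Lascd(a,b')\le N$; hence every formula implied by the ball-type and depending only on coordinates in $F$ already holds of $(a,b)$.
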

	\begin{proof}
		It is clear that the first bullet in (2) implies the second bullet. For the converse, we first use compactness to deduce that the impliation holds also for infinite tuples. Then we observe that by induction with respect to $N$, if $\Lascd(a,b)\leq N$, then $\Lascd(a,b)\leq n$.
		
		It is also clear that (2) implies (1).
		
		To see that (1) implies (2), we argue by contraposition and use a diagonal argument. More precisely suppose $(a_n)_{n\in \bN},(b_n)_{n\in \bN}$ are tuples such that for each $n$, $\Lascd(a_n,b_n)>n$ and $a_n\equivLasc b_n$. By Proposition~\ref{prop:ext1}, we can choose for each $m\in \bN$ a sequence $(b^m_n)_{n\in \bN}$ such that $b^m_m=b_m$ and $(a_n)_{n\in \bN}\equivLasc (b^m_n)_{n\in \bN}$. Then by Proposition~\ref{prop:ext2}, we have $\infty>\Lascd((a_n)_n,(b_n^m)_n)> m$. It follows that the diameter of the Lascar strong type of $(a_n)_n$ is infinite.
	\end{proof}
	
	\begin{rem}
		There are other equivalent characterisations of G-compactness we will not use in this paper, e.g.:
		\begin{itemize}
			\item
			the (Lascar) Galois group $\Gal(T)$ is Hausdorff (with the logic topology),
			\item
			for some tuple $m$ enumerating a small model, the class $[m]_{\equivLasc}$ has finite $\Lascd$-diameter,
			\item
			for some tuple $m$ enumerating a small model, the class $[m]_{\equivLasc}$ is type-definable. \xqed{\lozenge}
		\end{itemize}
	\end{rem}

%
%
%
%
%

	\section{Hereditary G-compactness}
	\label{sec:hgc}
	As we will see later in this section, G-compactness is not preserved under interpretations --- even adding or forgetting a single constant symbol can turn a G-compact theory in to a non-G-compact one and vice versa. Thus, to obtain a more well-behaved property of a theory, it seems natural to consider the following.
	\begin{dfn}
		\label{dfn:hered_g_comp}
		A theory $T$ is said to be \emph{hereditarily [$n$-]G-compact} if for every model $M\models T$, and every structure $N$ interpreted by $M$ (with parameters), $\Th(N)$ is [$n$-]G-compact.
		
		A theory $T$ is said to be \emph{weakly hereditarily [$n$-]G-compact} if for every model $M\models T$ and every reduct $N$ of $M$ (possibly after adding some parameters), $\Th(N)$ is [$n$-]G-compact.\xqed{\lozenge}
	\end{dfn}
	
	\begin{rem}
		It is not hard to see that for hereditary [$n$-]G-compactness of $T$, it is enough to consider reducts of models of $T^\eq$ (so hereditary [$n$-]G-compactness of $T$ is equivalent to weakly hereditary [$n$-]G-compactness of $T^\eq$): we can simply forget all the irrelevant structure, resulting in a collection of sorts with no structure, which will not affect [$n$-]G-compactness in any way.
		
		Likewise, it is clear that for weakly hereditary [$n$-]G-compactness of $T^\eq$, it is enough to consider the reducts of $T^\eq$ expanded by \emph{real} constants.\xqed{\lozenge}
	\end{rem}

	\subsection*{Examples}
	
	\begin{ex}
		\label{ex:skolem}
		If $T$ has definable Skolem functions, then any expansion of $T$ by constants is $1$-G-compact (but Example~\ref{ex:o-mingrp} shows that it is not hereditarily G-compact). This follows from the fact that in all of these expansions, $\dcl(\emptyset)$ is a model.
		
		It follows that if $T$ is an arbitrary non-G-compact theory, then $T^{\Sk}$, the Skolemization of $T$, is G-compact but not weakly hereditarily G-compact (because $T$ is a reduct of $T^{\Sk}$).\xqed{\lozenge}
	\end{ex}
	
	\begin{ex}
		Take any non-G-compact theory $T$ in a relational language $L$ such that $\dcl(\emptyset)\neq \emptyset$. Let $T_\infty$ be the $L$-theory of infinitely many disjoint models of $T$ (with the symbols of $L$ interpreted naturally within each model of $T$, but with no relations between models).
		
		Then one can show that $T_\infty$ is $1$-G-compact, but adding any parameter corresponding to an element of $\dcl(\emptyset)$ in a model of $T$ makes it not G-compact (this parameter makes each sort of this model definable and, in fact, stably embedded).\xqed{\lozenge}
	\end{ex}
	
	\begin{ex}
		\label{ex:simple_is_hgc}
		It is well-known that simplicity of a structure is preserved by interpretation. Since every simple theory is G-compact (Fact~\ref{fct:stable_simple_gc}), it follows that every simple theory is hereditarily $2$-G-compact. Similarly, every stable theory is hereditarily $1$-G-compact.\xqed{\lozenge}
	\end{ex}
	
	\begin{ex}
		\label{ex:o-mingrp}
		Any o-minimal expansion of a group (with a definable element distinct from the identity) is $1$-G-compact, because it has definable Skolem functions. More generally, Fact~\ref{fct:inpm_gcomp} implies that all o-minimal structures are $2$-G-compact. On the other hand, Theorem~\ref{thm:lin_not_hgc} implies that no o-minimal structure is hereditarily G-compact.
		\xqed{\lozenge}
	\end{ex}
	
	\begin{ex}
		Furthermore, NTP$_2$ with existence over $\emptyset$ and NSOP$_1$ theories with existence are $2$-G-compact (see Fact~\ref{fct:gc_in_ntp2} and Fact~\ref{fct:gcomp_nsop1}). This includes in particular simple theories and o-minimal theories. The o-minimal examples show that NTP$_2$ is not sufficient for hereditary G-compactness; the question about whether or not NSOP$_1$ is sufficient for hereditary G-compactness remains open (as it is not known whether all NSOP$_1$ theories have existence).\xqed{\lozenge}
	\end{ex}
	
	\subsection*{Hereditary G-compactness and connected group components}
	In this section, we will see some basic consequences of hereditary G-compactness of $T$ for the groups definable in $T$. Recall the notions of model-theoretic connected components of a definable group.
	\begin{dfn}
		Suppose $G$ is a group definable in $\fC$ with parameters in a small set $A$. Then $G^{000}_A$ is the smallest subgroup of $G=G(\fC)$ which is invariant under $\Aut(\fC/A)$ and which has small index in $G$ (i.e.\ no greater than $2^{\lvert T\rvert+\lvert A\rvert}$). Similarly, $G^{00}_A$ is the smallest subgroup of $G$ which is type-definable with parameters in $A$ and has small index.
		
		If $G^{000}_A$ does not depend on $A$ (over which $G$ is definable), then we write simply $G^{000}$ for $G^{000}_A$. Likewise, if $G^{00}_A$ does not depend on $A$, we write $G^{00}$. In these cases, we say that $G^{000}$ or $G^{00}$ (respectively) exists.\xqed{\lozenge}
	\end{dfn}
	(Note that clearly $G^{00}_A\geq G^{000}_A$.)
	
	\begin{fct}
		If $G$ is definable in $\fC\models T$ and $T$ has NIP, then $G^{00}$ and $G^{000}$ exist.
	\end{fct}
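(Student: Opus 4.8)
The plan is to treat the two assertions in parallel and to isolate the only substantive point, which is independence from the parameter set. For a fixed small $A$ the existence of a smallest invariant (resp.\ type-definable) bounded-index subgroup $G^{000}_A$ (resp.\ $G^{00}_A$) is a general fact valid in any theory; so, fixing a small $A_0$ over which $G$ is defined, I only have to show that $G^{000}_A$ and $G^{00}_A$ do not change as $A\supseteq A_0$ grows. Enlarging the base can only add invariant or type-definable sets, so $A\subseteq B$ forces $G^{000}_B\leq G^{000}_A$ and $G^{00}_B\leq G^{00}_A$; hence independence is equivalent to the assertion that enlarging the base never \emph{strictly} shrinks the component (apply this to $B=A\cup A'$ to compare two arbitrary bases). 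Equivalently, I must show that the component computed over $A$ is invariant under all of $\Aut(\fC/A_0)$, and not merely under $\Aut(\fC/A)$.

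For the type-definable component I would rely on the boundedness that NIP imposes on families of (relatively) definable bounded-index subgroups. Finite VC dimension forbids the shattering patterns that a strictly descending such family would create, so a Baldwin--Saxl--type argument gives, uniformly, that every intersection of subgroups drawn from such a family already equals a subintersection of bounded size. The point needing care is that $G^{00}$ is honestly type-definable and may lie strictly below the intersection of the finite-index definable subgroups, so one cannot reduce to finite-index pieces alone; controlling the genuinely type-definable subgroups is exactly where Shelah's NIP counting enters. With that boundedness in hand, passing from $A$ to $B\supseteq A$ cannot strictly shrink $G^{00}$, which is the no-shrinking statement from the first paragraph.

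The delicate half, and where I expect the real work to lie, is $G^{000}$. Here I would start from the description $G^{000}_A=\langle X_A\rangle$, where $X_A=\{gh^{-1}:g\equivLasc h\text{ over }A\}$: the subgroup $\langle X_A\rangle$ is $\Aut(\fC/A)$-invariant of bounded index, and conversely, for any invariant bounded-index $H$ the quotient $G/H$ is a bounded invariant image and hence factors through the finest bounded invariant equivalence relation over $A$, namely $\equivLasc$, forcing $X_A\subseteq H$ and thus minimality. As $A$ grows, Lascar equivalence over $A$ refines and $X_A$ shrinks, so once more everything comes down to the absence of strict shrinking. The obstacle is structural: $G^{000}_A$ is only invariant, not type-definable, so compactness is unavailable to tame it. The plan is to squeeze out of NIP two uniformities — a bound $n$ with $\langle X_A\rangle=X_A^{\cdot n}$, so that finitely many Lascar-connected factors already generate, and a stabilisation of $X_A$ as the base grows, reflecting the NIP-specific control of Lascar strong types over models. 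Establishing this last point without the crutch of type-definability is the step I expect to be the main obstacle.
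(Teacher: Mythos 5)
Your general-theory scaffolding is correct but carries none of the weight: the existence over each small base of a smallest invariant (resp.\ type-definable) bounded-index subgroup, the monotonicity $G^{000}_B\leq G^{000}_A$ and $G^{00}_B\leq G^{00}_A$ for $A\subseteq B$, the comparison of two bases via their union, and the identification $G^{000}_A=\langle X_A\rangle$ via the fact that $\equivLasc$ over $A$ is the finest bounded $A$-invariant equivalence relation --- all of this holds in an arbitrary theory. The entire content of the Fact is the two ``no strict shrinking'' statements, and in your write-up both are named rather than proved. For $G^{00}$: a Baldwin--Saxl argument genuinely controls \emph{uniformly definable} families of subgroups (this gives parameter-independence of $G^0$, the intersection of definable finite-index subgroups), but it does not by itself handle arbitrary type-definable subgroups of bounded index; bridging exactly that gap is Shelah's theorem on the existence of $G^{00}$ in NIP theories, and your phrase ``where Shelah's NIP counting enters'' is a pointer to that theorem, not an argument. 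For $G^{000}$: the ``stabilisation of $X_A$ as the base grows'' \emph{is} the theorem to be proved (Gismatullin's), and you say explicitly that you do not know how to establish it.

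Concretely, after your reductions what remains is: for $A_0\subseteq A\subseteq B$, every $gh^{-1}$ with $g\equivLasc h$ over $A$ lies in $\langle X_B\rangle$. Nothing in your outline produces this, and the auxiliary uniformity you propose --- a bound $n$ with $\langle X_A\rangle=X_A^{\cdot n}$ --- is both unjustified (you indicate no NIP mechanism that would force bounded generation) and not the route the known proof takes. The standard argument replaces $X_A$ by the type-definable set of ``$A$-infinitesimals'', i.e.\ elements $a^{-1}b$ where $(a,b)$ begins an infinite $A$-indiscernible sequence (these generate $G^{000}_A$ by the indiscernible-sequence description of the Lascar graph, cf.\ Remark~\ref{rem:Lascar_graph}), and then uses NIP-specific properties of indiscernible sequences to relate the generating sets over different bases. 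The paper itself does not reprove any of this; it cites \cite[Theorem 5.3, Remark 5.1]{Gis11}, where the $G^{000}$ statement is Gismatullin's theorem and the $G^{00}$ statement is Shelah's. As it stands, your proposal reduces the Fact to itself plus the names of the two theorems that would prove it, so it is not a proof.
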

	\begin{proof}
		See \cite[Theorem 5.3, Remark 5.1]{Gis11}.
	\end{proof}

	In \cite{GiNe08}, the authors consider the following construction: starting with a structure $M$ and a group $G(M)$ definable in $M$, they construct a structure $N=(M,X,\cdot)$, where $M$ has its original structure, $\cdot\colon G(M)\times X\to X$ is a free and transitive action, and there is no other structure on $X$ ($X$ is an ``affine copy of $G$''). They analyse the resulting structure, showing in particular that $\Aut(N)=G(M)\rtimes \Aut(M)$, as well as the following fact.
	\begin{fct}
		\label{fct:from_GN}
		If $M\models T$ and $G(M)$ is definable in $M$ without parameters, then the theory of $N=(M,X,\cdot)$ described above is G-compact if and only if $T$ is G-compact and $G^{00}_\emptyset=G^{000}_\emptyset$ (where $G=G(\fC)$ for the monster model $\fC\succeq M$).
	\end{fct}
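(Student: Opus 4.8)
The plan is to compute the group $\Autf(\fC_N)$ of Lascar strong automorphisms of a monster model $\fC_N\models\Th(N)$ explicitly, using the given decomposition. Here the $M$-sort of $\fC_N$ is a monster model $\fC\models T$, we set $G=G(\fC)$, and $\Aut(\fC_N)=G\rtimes\Aut(\fC)$ (the monster analogue of $\Aut(N)=G(M)\rtimes\Aut(M)$). Fix a base point $x_0\in X(\fC_N)$ and identify $X$ with $G$ via $g\leftrightarrow g\cdot x_0$; in these coordinates $(g,\sigma)$ sends the point at position $t$ to the point at position $\sigma(t)g$, with product rule $(a,\alpha)(b,\beta)=(\alpha(b)a,\alpha\beta)$. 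A small $M'\preceq\fC_N$ is the union of a small $M'_0\preceq\fC$ with a single orbit $G(M'_0)\cdot(g_b\cdot x_0)$, and a direct computation gives
\[
\Stab_{\Aut(\fC_N)}(M')=\{(\sigma(g_b)^{-1}g_b,\sigma):\sigma\in\Aut(\fC/M'_0)\}.
\]
Since $\Autf(\fC_N)$ is generated by the groups $\Stab(M')$, projecting to $\Aut(\fC)$ shows immediately that $\Autf(\fC_N)$ surjects onto $\Autf(\fC)$.

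The crux is to identify the kernel $K:=\Autf(\fC_N)\cap G$ with $G^{000}_\emptyset$. Taking the orbit through $x_0$ (i.e.\ $g_b=e$) gives $(e,\sigma)\in\Stab(M')\subseteq\Autf(\fC_N)$ for every $\sigma\in\Aut(\fC/M'_0)$, and multiplying such elements yields $(e,\sigma)\in\Autf(\fC_N)$ for \emph{all} $\sigma\in\Autf(\fC)$. Factoring a general generator as $(\sigma(g_b)^{-1}g_b,\sigma)=(\sigma(g_b)^{-1}g_b,e)\cdot(e,\sigma)$ then places $(\sigma(g_b)^{-1}g_b,e)$ in $\Autf(\fC_N)$, so $K$ contains every element $\sigma(g)^{-1}g$ with $\sigma\in\Autf(\fC)$ and $g\in G$. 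These generate $G^{000}_\emptyset$: the relation $g\sim h\iff g^{-1}h\in G^{000}_\emptyset$ is a bounded $\Aut(\fC)$-invariant equivalence relation, hence refined by $\equivLasc$, so $g\equivLasc\sigma(g)$ gives $\sigma(g)^{-1}g\in G^{000}_\emptyset$; conversely the group they generate is $\Aut(\fC)$-invariant of bounded index, so it contains $G^{000}_\emptyset$. For the reverse inclusion $K\subseteq G^{000}_\emptyset$, every element of $K$ is a product of $G$-components of generators whose $\Aut(\fC)$-parts cancel, and each such component lies in the $\Aut(\fC)$-invariant group $G^{000}_\emptyset$. Hence $K=G^{000}_\emptyset$, and therefore $\Autf(\fC_N)=G^{000}_\emptyset\rtimes\Autf(\fC)$ inside $G\rtimes\Aut(\fC)$.

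Since $\Autf(\fC)$ acts trivially on $G/G^{000}_\emptyset$ (again by the refinement argument), passing to the quotient gives a group isomorphism
\[
\Gal(\Th(N))=\Aut(\fC_N)/\Autf(\fC_N)\;\cong\;(G/G^{000}_\emptyset)\rtimes\Gal(T).
\]
Now invoke the characterisation that a theory is G-compact iff its Lascar Galois group is Hausdorff in the logic topology. A semidirect product of topological groups is Hausdorff iff both factors are, so $\Th(N)$ is G-compact iff $G/G^{000}_\emptyset$ and $\Gal(T)$ are both Hausdorff; the latter says exactly that $T$ is G-compact, while $G/G^{000}_\emptyset$ is Hausdorff iff $G^{00}_\emptyset=G^{000}_\emptyset$, since $G^{00}_\emptyset/G^{000}_\emptyset$ is the closure of the identity there ($G^{00}_\emptyset$ being the smallest type-definable subgroup of bounded index). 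This is the claimed equivalence. The point demanding care — and the main obstacle — is verifying that the logic topology on $\Gal(\Th(N))$ matches the product topology on the right-hand side; this rests on the Gismatullin–Newelski analysis of definable sets in $N$, namely that $\tp(\bar a,\bar c)$ for $\bar a$ from $M$ and $\bar c$ from $X$ is governed by $\tp(\bar a)$ together with the relative positions $c_i/c_j\in G$. Alternatively one can bypass the topology and argue with diameters: the $\equivLasc$-class of $x_0$ is exactly $G^{000}_\emptyset\cdot x_0$, which is type-definable (equivalently, of finite diameter) iff $G^{000}_\emptyset=G^{00}_\emptyset$, whereas the $\Lascd$-diameters of $M$-sort tuples computed in $N$ coincide with those computed in $T$ (every $M'_0\preceq\fC$ and every witnessing automorphism lifts to $\fC_N$), and a short estimate bounding a mixed type's diameter by the sum of the two contributions settles the remaining direction.
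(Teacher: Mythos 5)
The paper's own ``proof'' of this Fact is a citation to \cite[Corollary 3.6]{GiNe08}, so you are attempting to reconstruct the Gismatullin--Newelski argument from scratch. Your algebraic skeleton is sound and does match the structure of their analysis: granting their groundwork lemmas (that the small models of $\Th(N)$ are exactly the pairs $(M'_0,G(M'_0)\cdot x_b)$ with $M'_0\preceq \fC$, and that every automorphism of $\fC$ lifts --- this is where the ``no extra structure on $X$'' analysis really enters), your computations of the pointwise stabilisers, of $\Autf(\fC_N)=G^{000}_\emptyset\rtimes\Autf(\fC)$, and of $[x_0]_{\equivLasc}=G^{000}_\emptyset\cdot x_0$ are all correct, as is the claim that Lascar distances of $M$-sort tuples agree in $\fC$ and $\fC_N$.

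The genuine gap is at the analytic heart of the ``if'' direction ($T$ G-compact and $G^{00}_\emptyset=G^{000}_\emptyset$ imply $\Th(N)$ G-compact), and it occurs in \emph{both} routes you offer. In the topological route you openly leave unproved that the logic topology on $\Gal(\Th(N))$ is the product topology on $(G/G^{000}_\emptyset)\rtimes\Gal(T)$; but that identification \emph{is} the hard content (closedness in the logic topology is a type-definability statement about unions of $\Autf(\fC_N)$-cosets, i.e.\ exactly the analysis you are trying to avoid), so this route as written proves nothing beyond the abstract group isomorphism. In the diameter route, the final step --- ``a short estimate bounding a mixed type's diameter by the sum of the two contributions'' --- is not a valid argument: the Lascar distance of a pair is not controlled by any function of the Lascar distances of its coordinates. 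Concretely, after you move $(\bar a,c)$ to $(\bar b,d)$ with $\bar b$ correct, you must move $d$ to $rd$ (some $r\in G^{000}_\emptyset$) \emph{while ending with the same $M$-part $\bar b$}; the chains witnessing $\Lascd(d,rd)\leq m$ (coming from the diameter of $[x_0]_{\equivLasc}$) move the $M$-sort arbitrarily, and fixing it up again restarts the problem. What is actually needed here is to show that the mixed Lascar classes are type-definable --- using that G-compactness of $T$ makes $\equivLasc^T$ type-definable (via Fact~\ref{fct:gcom_equivalent}), that $G^{00}_\emptyset=G^{000}_\emptyset$ is type-definable, and that type-definable sets are closed under projection --- and then to invoke Newelski's theorem that a type-definable Lascar strong type has finite diameter. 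That theorem is a substantial external ingredient (it is also what hides inside your parenthetical ``type-definable (equivalently, of finite diameter)''), and your sketch never names it, nor applies it where it is needed (to the mixed classes rather than just to $[x_0]_{\equivLasc}$). A smaller, fixable gap in the ``only if'' direction: from finiteness of the diameter of $[x_0]_{\equivLasc}$ you only get that $G^{000}_\emptyset$ is type-definable \emph{over $x_0$}; to conclude $G^{00}_\emptyset=G^{000}_\emptyset$ you still need the standard fact that an invariant, type-definable-with-parameters set is type-definable over $\emptyset$, together with the transfer of type-definability for $M$-sort sets between $\fC_N$ and $\fC$.
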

	\begin{proof}
		This is \cite[Corollary 3.6]{GiNe08}. Note that the authors use the notation $G^*_L$ and $G^{\infty}_\emptyset$ for $G^{000}_\emptyset$.
	\end{proof}
	
	\begin{rem}
		\label{rem:single_constant}
		If $T$ is G-compact, $M\models T$ and $G$ is definable in $M$ without parameters, with $G^{00}_\emptyset\neq G^{000}_\emptyset$, then the theory of $N=(M,X,\cdot)$ defined as in \cite{GiNe08} is not G-compact, but becomes G-compact as soon as we add a constant symbol for an element of $X$. Conversely, if we fix any $x_0\in X$, then $(M,X,\cdot)$ is a non-G-compact reduct of the G-compact $(M,X,\cdot,x_0)$.\xqed{\lozenge}
	\end{rem}
	
	\begin{prop}
		\label{prop:def_group}
		If $T$ is hereditarily G-compact, $A$ is a small set and $G$ is a group definable in $T$ over $A$, then we have $G^{00}_A=G^{000}_A$. In particular, if $T$ has NIP, then $G^{00}=G^{000}$.
	\end{prop}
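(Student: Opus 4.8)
The plan is to reduce to the case $A=\emptyset$ and then derive a contradiction from the Gismatullin--Newelski construction recalled in Fact~\ref{fct:from_GN} and Remark~\ref{rem:single_constant}. For the reduction, given $G$ definable over $A$, I would pass to the theory $T_A$ obtained by naming the elements of $A$ with constants. Expanding a model by constants is an interpretation with parameters, and any structure interpreted (with parameters) in a model of $T_A$ is already interpreted (with parameters) in the underlying model of $T$; hence $T_A$ is again hereditarily G-compact. Moreover $G$ is now $\emptyset$-definable, and $G^{00}_A$, $G^{000}_A$ computed in $T$ coincide with $G^{00}_\emptyset$, $G^{000}_\emptyset$ computed in $T_A$, since $A$-type-definability and $\Aut(\fC/A)$-invariance become $\emptyset$-type-definability and invariance over the new empty set. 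So it suffices to treat $A=\emptyset$.

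So suppose $A=\emptyset$, that $G$ is $\emptyset$-definable, and assume towards a contradiction that $G^{00}_\emptyset\neq G^{000}_\emptyset$. Note first that $T$ itself is G-compact, by applying hereditary G-compactness to the trivial interpretation $N=M$. Now form the structures $N_0=(M,X,\cdot,x_0)$ and $N=(M,X,\cdot)$ from the construction of \cite{GiNe08}, where $x_0\in X$ is a fixed point. Choosing the base point $x_0$ identifies $X$ with $G$ via $g\mapsto g\cdot x_0$, so $N_0$ is interpretable in $M$, and is therefore (isomorphic to) a reduct of $M^\eq$. Since $N$ is obtained from $N_0$ by forgetting the single constant $x_0$, it is a reduct of $N_0$, and hence also a reduct of $M^\eq$. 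Recalling that hereditary G-compactness of $T$ amounts exactly to G-compactness of all (parameter-)reducts of models of $T^\eq$, it follows that $\Th(N)$ is G-compact.

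On the other hand, by Remark~\ref{rem:single_constant} --- equivalently Fact~\ref{fct:from_GN}, using that $T$ is G-compact together with $G^{00}_\emptyset\neq G^{000}_\emptyset$ --- the theory of $N=(M,X,\cdot)$ is \emph{not} G-compact. This contradiction shows $G^{00}_\emptyset=G^{000}_\emptyset$, and unwinding the reduction gives $G^{00}_A=G^{000}_A$ for every small $A$. Finally, if $T$ has NIP then $G^{00}$ and $G^{000}$ exist, i.e.\ do not depend on the set of parameters over which $G$ is defined, so the equalities $G^{00}_A=G^{000}_A$ upgrade to $G^{00}=G^{000}$.

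The step I expect to require the most care is verifying that $N=(M,X,\cdot)$ genuinely falls within the scope of hereditary G-compactness. Concretely, one must check that forgetting the base point $x_0$ from the interpretable structure $N_0$ lands us among the reducts of $M^\eq$, so that the reduct reformulation of Definition~\ref{dfn:hered_g_comp} applies, and that the resulting reduct really is the torsor structure $(M,X,\cdot)$ with no $\emptyset$-definable point in $X$ --- matching precisely the hypotheses under which Fact~\ref{fct:from_GN} fails to be G-compact. Everything else is bookkeeping: the reduction to $A=\emptyset$ and the invariance of the connected components under naming parameters.
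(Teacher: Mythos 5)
Your proof is correct and takes essentially the same approach as the paper, whose entire proof is the one-line observation that $M$ interprets $N=(M,X,\cdot)$, so hereditary G-compactness gives G-compactness of $\Th(N)$, and Fact~\ref{fct:from_GN} then forces $G^{00}_A=G^{000}_A$. The steps you spell out --- naming $A$ as constants, trivialising the torsor by a choice of $x_0$, and realising $N$ as a reduct of the interpreted structure $N_0$ --- are precisely the details the paper compresses into the word \emph{clearly}.
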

	\begin{proof}
		Immediate by Fact~\ref{fct:from_GN}, as $M$ clearly interprets $N$.
%
	\end{proof}
	Note that G-compactness alone certainly does not guarantee $G^{00}=G^{000}$, not even under NIP. For instance, the group $G=\widetilde{\SL_2(\bR^*)}$ from \cite[Theorem 3.2]{CP12} is definable in $M=((\bR,+,\cdot),(\bZ,+))$, which is G-compact (even after adding some parameters), by o-minimality of the reals and by stability of the integers. The proof of \cite[Theorem 3.2]{CP12} shows that $G^{00}\neq G^{000}$.

	\section{Three-splitting and cyclic three-splitting}
	\label{sec:thrsp}
	In this section, we introduce several properties of posets which will be necessary to prove the main lemmas.
	\subsection*{Linear sum; three-splitting}
	Linear sum is an elementary operation on partially ordered sets.
	\begin{dfn}
		Given two posets $P=(P,<),(Q,<)$, the \emph{linear sum} $P\oplus Q$ is defined as $(P\sqcup Q,<)$ where $a<b$ if either $a\in P$ and $b\in Q$ or $a<b$ in one of $P$, $Q$.
		
		Likewise, given an integer $n$, $P^{\oplus n}$ is the linear sum of $n$ copies of $P$ (note that this is the same as $[n]\times P$, where $[n]=\{1,\ldots,n\}$ has the natural ordering).\xqed{\lozenge}
	\end{dfn}
	Informally speaking, $P\oplus Q$ is the disjoint union of $P$ and $Q$ with $Q$ put after (or above) $P$.
%

	\begin{dfn}
		We say that a partially ordered set $(P,<)$ is \emph{initially self-additive} if the initial embedding of $P$ into $P^{\oplus 2}$ is elementary.\xqed{\lozenge}
	\end{dfn}
	The following proposition shows that initial self-additivity has some rather strong model-theoretic consequences.
	\begin{prop}
		\label{prop:self_add_propagates},
		Suppose that $P$ is an initially self-additive poset. Then for every two posets $Q_1,Q_2\equiv P$, the initial embedding of $Q_1$ into $Q_1\oplus Q_2$ is elementary.
	\end{prop}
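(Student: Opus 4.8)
The plan is to reduce elementarity of an initial embedding to a first-order scheme in an expanded language, and then to transfer that scheme from the structure $(P\oplus P, P)$ to $(Q_1\oplus Q_2, Q_1)$ by showing that the linear-sum construction respects elementary equivalence.

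First, I would work in the language $L'=\{<,I\}$, where $I$ is a fresh unary predicate. Given any poset $R$ together with a downward-closed subset $J\subseteq R$, the set $J$ is automatically a substructure (the language is relational), so by the Tarski--Vaught test the inclusion $J\hookrightarrow R$ is elementary if and only if $(R,J)$ satisfies, for every $\{<\}$-formula $\varphi(x,\bar y)$, the $L'$-sentence
\[
\theta_\varphi \;:=\; \forall \bar y\,\Big[\,\textstyle\bigwedge_i I(y_i)\to \big(\exists x\,\varphi(x,\bar y)\to \exists x\,(I(x)\wedge\varphi(x,\bar y))\big)\Big].
\]
Thus initial self-additivity of $P$ says exactly that the $L'$-structure $\mathcal A:=(P\oplus P,\ I=P_{\text{lower}})$ models every $\theta_\varphi$, and the goal reduces to showing that $\mathcal B:=(Q_1\oplus Q_2,\ I=Q_1)$ models every $\theta_\varphi$ as well.

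Second, the key step is to show that the construction $(R_1,R_2)\mapsto (R_1\oplus R_2,\ I=R_1)$, viewed as producing an $L'$-structure, preserves elementary equivalence separately in each argument --- in fact it preserves $\equiv_n$ for every quantifier rank $n$. I would prove this by an Ehrenfeucht--Fra\"iss\'e argument: in the $n$-round $L'$-game on $(R_1\oplus R_2,R_1)$ versus $(R_1'\oplus R_2',R_1')$, the duplicator runs in parallel winning strategies for the $n$-round $\{<\}$-games on $R_1$ versus $R_1'$ and on $R_2$ versus $R_2'$, answering in the copy matching the copy in which the spoiler just played. Because all relations between the two copies are fixed (every lower element lies below every upper element) and the predicate $I$ picks out exactly the lower copy, the two component partial isomorphisms glue to a single $L'$-partial isomorphism at each stage, so the duplicator wins. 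Hence $R_1\equiv R_1'$ and $R_2\equiv R_2'$ imply $(R_1\oplus R_2,R_1)\equiv_{L'}(R_1'\oplus R_2',R_1')$.

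Finally, applying this with $R_1=R_2=P$, $R_1'=Q_1$, $R_2'=Q_2$ and using $P\equiv Q_1$ and $P\equiv Q_2$ yields $\mathcal A\equiv_{L'}\mathcal B$. Since $\mathcal A\models\theta_\varphi$ for every $\varphi$ by hypothesis, the same holds of $\mathcal B$, which is precisely the statement that the initial embedding of $Q_1$ into $Q_1\oplus Q_2$ is elementary. The main obstacle is the composition step: one must check that the duplicator's strategy respects the predicate $I$ and the cross-copy order simultaneously. But since both of these are determined entirely by which copy each chosen element lies in, and since the lower and upper copies interact only through the constant relation ``lower below upper,'' no genuine interaction between the two components arises and the gluing goes through cleanly.
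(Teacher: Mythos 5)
Your proof is correct and takes essentially the same route as the paper's: the paper likewise reduces the claim to the two facts that the linear sum equipped with predicates for its summands preserves elementary equivalence in each argument, and that elementarity of the initial embedding is itself a first-order property of the expanded structure, then specialises to $P_1=P_2=P$. You have simply filled in the details the paper asserts as ``easy to see'' --- namely the Tarski--Vaught scheme $\theta_\varphi$ and the Ehrenfeucht--Fra\"iss\'e composition argument --- and these details are carried out correctly.
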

	\begin{proof}
		Note that it is easy to see that if $P_1,P_2,Q_1,Q_2$ are posets such that $P_1\equiv Q_1$ and $P_2\equiv Q_2$, then $P_1\oplus P_2\equiv Q_1\oplus Q_2$, and even $(P_1\oplus P_2,P_1,P_2)\equiv (Q_1\oplus Q_2,Q_1,Q_2)$ (as posets with additional predicates for $P_1$ and $P_2$ or $Q_1$ and $Q_2$, respectively).
		
		But $P_1$ being an elementary substructure of $P_1\oplus P_2$ is clearly an elementary property of $(P_1\oplus P_2,P_1,P_2)$. The proposition follows by taking $P_1=P_2=P$.
%
%
%
	\end{proof}

	
	\begin{rem}
		An initially self-additive poset can have no maximal elements and no finite maximal chains.\xqed{\lozenge}
	\end{rem}
	
	\begin{prop}
		\label{prop:no_endpoints_three_splitting}
		If $(P,<)$ is linear, has no endpoints and is dense or discrete, then it is initially self-additive.
	\end{prop}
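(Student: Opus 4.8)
The plan is to show directly that the initial inclusion $\iota\colon P\hookrightarrow P\oplus P$ (identifying $P$ with the lower copy) is an elementary embedding, which is exactly the assertion of initial self-additivity. The uniform strategy is to exhibit a language $L'\supseteq\{<\}$ in which $\Th(P)$ eliminates quantifiers and in which $\iota$ is still an $L'$-embedding: once we know that $P\oplus P$ is a model of the same complete $L'$-theory and that $\iota$ is an $L'$-embedding between two models of a theory with quantifier elimination, $\iota$ is automatically elementary, and a fortiori elementary for the reduct language $\{<\}$. I would then split according to the two hypotheses.

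In the \emph{dense} case I take $L'=\{<\}$. Here $\Th(P)$ is the theory $\mathrm{DLO}$ of dense linear orders without endpoints, which is complete and has quantifier elimination. I first check that $P\oplus P$ is again a model of $\mathrm{DLO}$: it has no least element (the lower copy has none) and no greatest element (the upper copy has none), and it is dense, since density inside each copy is inherited, while for $a$ in the lower copy and $b$ in the upper copy any element of the lower copy above $a$ (which exists because $P$ has no maximum) lies strictly between them. Since $\iota$ manifestly preserves and reflects $<$, it is an embedding of models of $\mathrm{DLO}$, hence elementary by quantifier elimination.

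In the \emph{discrete} case the pure order does not admit quantifier elimination, so I enrich the language to $L'=\{<,S,S^{-1}\}$, where $S$ names the successor function and $S^{-1}$ the predecessor; both are total since $P$ is discrete with no endpoints. The theory of discrete linear orders without endpoints is complete and has quantifier elimination in $L'$: the one nontrivial instance, eliminating an existential over a bounded interval that avoids finitely many named points, reduces to the quantifier-free condition that the interval contains sufficiently many points, i.e.\ that its endpoints are not within a fixed finite $S$-distance. Again $P\oplus P$ is a discrete linear order without endpoints, the only possible defect being at the junction of the two copies; but the lower copy has no top and the upper copy has no bottom, so no element there loses its immediate neighbours. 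The decisive point is that $\iota$ preserves $S$ and $S^{-1}$: for $x$ in $P$ the successor computed in $P$ equals the successor computed in $P\oplus P$, because $P$ has no maximum and hence that successor already lies in the lower copy; symmetrically for predecessors, using that $P$ has no minimum. Thus $\iota$ is an $L'$-embedding between models of a theory with quantifier elimination, so it is $L'$-elementary, and restricting to $\{<\}$-formulas yields the claim.

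I expect the main obstacle to be the discrete case, and within it two points: correctly identifying a language in which quantifier elimination holds (the pure order language fails), and verifying that $\iota$ respects the successor and predecessor. Both are precisely where the hypothesis of no endpoints is used, so the argument would visibly break for orders with an endpoint. If one prefers to avoid invoking quantifier elimination, the same two cases can be handled by a back-and-forth argument showing that $\iota$ extends to a system of partial isomorphisms of every finite depth, giving a winning strategy for Duplicator in the corresponding Ehrenfeucht--Fra\"iss\'e game; but the quantifier-elimination route seems cleaner to write down.
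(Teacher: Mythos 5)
Your proof is correct and follows essentially the same route as the paper's: quantifier elimination for dense linear orders without endpoints in the pure order language, and for discrete linear orders without endpoints in the language enriched by the successor function, combined with the observation that the initial embedding is a substructure in the enriched language. The only difference is that you name $S^{-1}$ explicitly (the paper uses just $(<,S)$), which is immaterial, and you spell out the verifications the paper leaves implicit.
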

	\begin{proof}
		If $P$ is dense, then the theory of $(P,<)$ is the theory of dense linear orderings, and it has quantifier elimination. In particular, $P$ and $P^{\oplus 2}$ have the same theory, and by q.e., the embedding is elementary.
		
		The theory of discrete linear orders without endpoints is complete and it defines the successor function $S$; furthermore, it has quantifier elimination in the language $(<,S)$. As in the dense case, if $P$ is discrete without endpoints, then so is $P^{\oplus 2}$. The initial embedding of $P$ in $P^{\oplus 2}$ is a substructure in the $(<,S)$ language, so it is elementary.
	\end{proof}

	\subsection*{Cyclic orders; cyclic three-splitting}
	
	\begin{dfn}
		A ternary relation $C(x,y,z)$ is a (strict) partial cyclic order on a set $G$ if it satisfies the following axioms:
		\begin{enumerate}
			\item
			cyclicity: if $C(x,y,z)$, then $C(z,x,y)$,
			\item 
			asymmetry: if $C(x,y,z)$, then $\neg C(z,y,x)$,
			\item 
			transitivity: if $C(x,y,z)$ and $C(y,z,t)$, then $C(x,y,t)$.\xqed{\lozenge}
		\end{enumerate}
	\end{dfn}

	\begin{rem}[dummy constants]
		\label{rem:dummy_constants}
%
		For any structure $M$ in which at least one sort has more than one element, and every positive integer $n$, there is a pointwise definable subset of $M^{\eq}$ with exactly $n$ elements.\xqed{\lozenge}
	\end{rem}
	
	\begin{dfn}
		\label{dfn:C_n}
		Given a poset $P$ and $n\in \bN\setminus \{0\}$, we define $C_n(P)$ as $(P\times [n],C,R_n)$, where $[n]$ is the set of integers $\{1,\ldots,n\}$, $C$ is the natural cyclic ordering (induced from  $P^{\oplus n}$), while $R_n$ is the automorphism given by $R_n(p,n)=(p,1)$ and $R_n(p,i):=(p,i+1)$ for $i<n$. \xqed{\lozenge}
	\end{dfn}
	
	\begin{figure}
		\includegraphics[width=\textwidth]{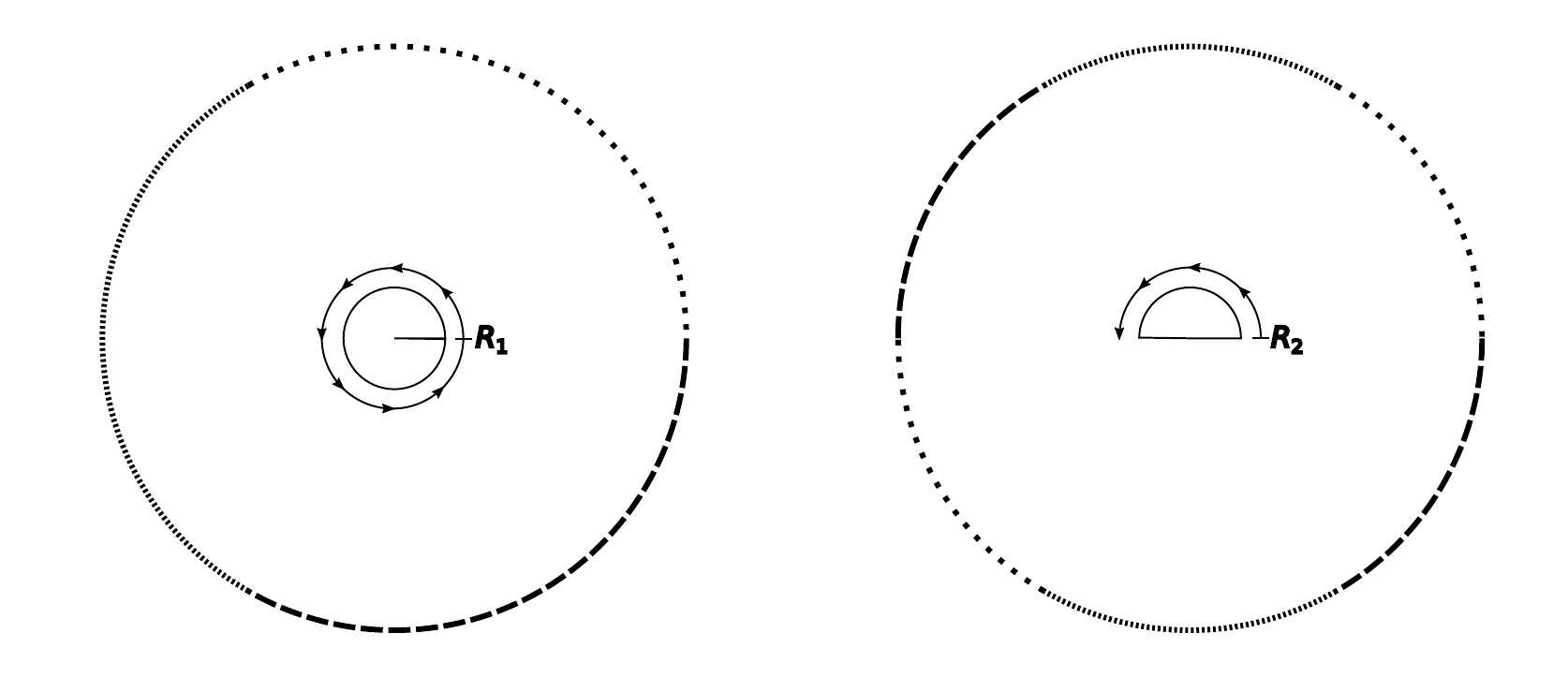}
		\caption{$C_1(P^{\oplus 3})$ and $C_2(P^{\oplus 3})$ along with the three embeddings of $C_1(P)$ and $C_2(P)$ (respectively).}
	\end{figure}
	
	\begin{rem}
		\label{rem:P_ints_Cn(P)}
		Remark~\ref{rem:dummy_constants} easily implies that every poset $P$ interprets each $C_n(P)$ without parameters.\xqed{\lozenge}
	\end{rem}
	
	\begin{rem}
		When $P$ is dense linear without endpoints, the structure $C_n(P)$ is essentially the $M_n$ described in Section~4 of \cite{CLPZ01}.\xqed{\lozenge}
	\end{rem}
%
%
%

	\begin{dfn}
		\label{dfn:wcts}
		A poset $(P,<)$ is \emph{cyclically three-splitting} if for every $n$ we have $C_n(P)\preceq C_n(P^{\oplus 3})$ and $C_n(P^{\oplus 2})\preceq C_n(P^{\oplus 3})$ (where the embeddings are induced by the initial embeddings of $P$ and $P^{\oplus 2}$ in $P^{\oplus 3}$).
		
		$P$ is \emph{weakly cyclically three-splitting} if there is a cyclically three-splitting poset $(Q,<)$ such that for all $n$ we have $C_n(P)\cong C_n(Q)$.\xqed{\lozenge}
	\end{dfn}
	
%
	\begin{rem}
		Note that the three copies of $C_n(P)$ in $C_n(P^{\oplus 3})$ are conjugate by automorphisms of $C_n(P^{\oplus 3})$, so if one of them is an elementary substructure, so are the other two.
		
		The same is true for the three copies of $C_n(P^{\oplus 2})$ in $C_n(P^{\oplus 3})$. \xqed{\lozenge}
	\end{rem}

	\begin{rem}
		\label{rem:tspl_to_ctspl}
		By Remark~\ref{rem:P_ints_Cn(P)}, an initially self-additive poset is cyclically three-splitting (because by Proposition~\ref{prop:self_add_propagates}, initial self-additivity implies that the initial embeddings of $P$ and $P^{\oplus 2}$ in $P^{\oplus 3}$ are elementary). The same is true for ``finally self-additive'' posets.\xqed{\lozenge}
	\end{rem}
	
	The prototypical examples of weakly cyclically three-splitting posets are the infinite discrete and dense linear orders.
	\begin{prop}
		\label{prop:lin_orders_are_wts}
		A dense or discrete linear order without endpoints is cyclically three-splitting.
		
		A discrete linear order with two endpoints is weakly cyclically three-splitting.
	\end{prop}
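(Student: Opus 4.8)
The first assertion is immediate from what has already been established: a dense or discrete linear order without endpoints is initially self-additive by Proposition~\ref{prop:no_endpoints_three_splitting}, and by Remark~\ref{rem:tspl_to_ctspl} every initially self-additive poset is cyclically three-splitting. So the only real content lies in the second assertion, and the plan is to exhibit, for a discrete linear order $P$ with two endpoints, a \emph{cyclically} three-splitting witness $Q$ with $C_n(P)\cong C_n(Q)$ for all $n$; the first part then applies to $Q$, and $P$ is weakly cyclically three-splitting by Definition~\ref{dfn:wcts}.

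First I would record the structural description of $P$. Writing $0$ for its minimum and $m$ for its maximum, the iterated successors of $0$ form an initial segment of order type $\omega$, the iterated predecessors of $m$ form a final segment of order type $\omega^*$, and (by discreteness) the remaining middle part $P_0$ is discrete with no endpoints, possibly empty; thus $P\cong \omega\oplus P_0\oplus \omega^*$. I would then set $Q:=P_0\oplus \bZ$ (the middle part followed by a copy of $\bZ$), which is discrete without endpoints, and hence cyclically three-splitting by the first part.

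The heart of the argument is the isomorphism $C_n(P)\cong C_n(Q)$. The key observation is that passing to the cyclic order glues the global maximum of $P^{\oplus n}$ back to its global minimum, so the final $\omega^*$-tail of each copy fuses (cyclically) with the initial $\omega$-tail of the next copy into a single bi-infinite, $\bZ$-type arc: in effect the endpoints of $P$ become invisible to $C_n$. Concretely, $C_n(P)$ is $P^{\oplus n}$ closed up into a cyclically ordered circle and equipped with $R_n$; cutting this circle at the gap lying between the initial $\omega$-block and $P_0$ of the first copy straightens it into the linear order $P_{0}\oplus(\omega^*\oplus\omega)\oplus P_0\oplus(\omega^*\oplus\omega)\oplus\cdots$, which, using $\omega^*\oplus\omega\cong \bZ$ and associativity of $\oplus$, is exactly $(P_0\oplus\bZ)^{\oplus n}=Q^{\oplus n}$. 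Closing this back up recovers $C_n(Q)$, so the cut-and-straighten map is an isomorphism of the underlying cyclically ordered sets. Finally I would check that it intertwines the two copies of $R_n$: since $R_n$ shifts the $P$-copy index by one, it sends the $i$-th fused block $P_{0,i}\oplus\omega^*_i\oplus\omega_{i+1}$ to the $(i+1)$-th, which is precisely the shift of the $Q$-copy index in $C_n(Q)$.

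The main obstacle is purely bookkeeping: verifying that the cut-and-straighten map is genuinely an isomorphism of the full structures $(\,\cdot\,,C,R_n)$, not merely of the cyclic orders, and that this holds uniformly in $n$ — keeping careful track of the wraparound identifying the last and first copies, and of the degenerate case $P_0=\emptyset$ (where $Q\cong\bZ$ and $C_n(P)\cong C_n(\bZ)$). Once this is in place, $Q$ is a cyclically three-splitting poset with $C_n(P)\cong C_n(Q)$ for every $n$, which is exactly what weak cyclic three-splitting of $P$ demands.
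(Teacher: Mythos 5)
Your proof is correct and follows essentially the same route as the paper: decompose the order as $\omega\oplus L\oplus\omega^*$ with $L$ discrete without endpoints, take $Q=L\oplus\bZ$ as the cyclically three-splitting witness, and observe $C_n(\omega\oplus L\oplus\omega^*)\cong C_n(L\oplus\bZ)$ for all $n$. The only difference is that the paper dismisses this isomorphism as ``easy to see'', whereas you spell out the cut-and-straighten argument (fusing each $\omega^*$-tail with the next copy's $\omega$-tail into a $\bZ$-block) and the $R_n$-equivariance check explicitly.
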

	\begin{proof}
		The first part is immediate by Proposition~\ref{prop:no_endpoints_three_splitting} and Remark~\ref{rem:tspl_to_ctspl}, as discrete and dense linear orders without endpoints are cyclically three-splitting by virtue of being initially self-additive.
		
		For the second part, note that an infinite discrete linear order with two endpoints is of the form $\omega\oplus L\oplus \omega^*$, where $L$ is discrete without endpoints (and $\omega^*$ is $\omega$ in reverse, i.e.\ an infinite descending chain).
		
		It is easy to see that for each $n$, $C_n(\omega\oplus L\oplus \omega^*)\cong C_n(L\oplus \bZ)$. Since $L\oplus \bZ$ is discrete without endpoints, it is cyclically three-splitting, so by definition $\omega\oplus L\oplus \omega^*$ is weakly cyclically three-splitting.
	\end{proof}
	
	\section{G-compactness of \texorpdfstring{$C_n(P)$}{Cn(P)} and linear orders}
	\label{sec:main_proof}
	\subsection*{Lascar diameters in \texorpdfstring{$C_n(P)$}{Cn(P)}}
	In this section, we fix a weakly cyclically three-splitting poset $P$, a natural number $n\geq 3$, and let $T_n:=\Th(C_n(P))$. The aim is to show that in a monster model of $T_n$, we can find a Lascar strong type of diameter at least $\lfloor n/2\rfloor$ (thus showing the lack of hereditary G-compactness of $P$). Thus we may (and do) assume without loss of generality that $P$ is cyclically three-splitting (because $T_n$ depends only on the isomorphism class of $C_n(P)$).
	
	We fix a monster model $\fC_n\succeq C_n(P^{\oplus 3})$ (by cyclical three-splitting, $\fC_n$ is a monster model of $T_n$). Denote by $S_n$ the definable binary relation given by $S_n(x,y)$ if $C(R_n^{-1}(x),y,R_n(x))$. It is helpful to think of $S_n(x,y)$ as saying that the distance between $x$ and $y$ is less than $1$.
	
	\begin{prop}
		\label{prop:props_of_Tn}
		$T_n$ implies the following:
		\begin{itemize}
			\item
			$\forall x\forall y\, \biglor_{i=1}^n S_n(x,R_n^i(y))$
			\item 
			$\forall x\forall y\, S_n(x,y)\iff S_n(y,x)$
			\item 
			For each $0\leq k<n/2$ we have $\forall x\, \neg S_n^k(x,R^{k}(x))$.
		\end{itemize}
	\end{prop}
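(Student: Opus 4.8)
All three displayed statements are first-order sentences in the language $(C,R_n)$: each $S_n$ and each $R_n^i$ is quantifier-free definable, and in the third item $S_n^k$ denotes the $k$-fold relational composition of $S_n$, which is $\exists$-definable for each fixed $k$. Since $T_n=\Th(C_n(P))$, it suffices to verify each of them in the base structure $C_n(P)$ itself. The plan is to make the geometry transparent by passing to the ``universal cover'' $\widetilde P:=P\times\bZ$, ordered lexicographically (first by the $\bZ$-coordinate, then by $P$), on which $R_n$ lifts to the shift $(p,m)\mapsto(p,m+1)$ and the cyclic order $C$ of $C_n(P)$ is the one obtained by wrapping this linear order up modulo $n$. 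Here $S_n$ lifts to the relation $\widetilde S(x,y)$ asserting $R_n^{-1}x<y<R_nx$; unwinding the definition, $\widetilde S((p,m),(q,m'))$ holds exactly when $|m-m'|\le 1$, with no constraint on the $P$-coordinates when $m'=m$, and otherwise with the constraint that the point in the higher layer has the strictly smaller $P$-coordinate. Since the ``window'' $(R_n^{-1}x,R_nx)$ spans only two layers while the cover has period $n\ge 3$, every $S_n$-edge in $C_n(P)$ lifts uniquely once a lift of one endpoint is chosen.

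First I would dispatch the top two items. For the first, given $x$ in layer $a$ and $y$ in layer $b$, the element $R_n^{a-b}(y)$ is the copy of $y$ lying in the same layer as $x$; from the $m'=m$ case of $\widetilde S$ this copy lies in the window of $x$, so $S_n(x,R_n^{a-b}y)$ holds, and as $a-b$ is congruent mod $n$ to some $i\in\{1,\dots,n\}$ this witnesses the disjunction. For the second, $\widetilde S$ is visibly symmetric: when the two points share a layer it always holds, and when they are in adjacent layers it holds precisely when the point in the upper layer has the smaller $P$-coordinate, a condition symmetric in the unordered pair. Transporting this back through the unique lifting gives $S_n(x,y)\iff S_n(y,x)$.

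The substantive item is the third, for $1\le k<n/2$ (the instance $k=0$ being degenerate). The plan is to suppose toward a contradiction that $x_0=x,x_1,\dots,x_k=R_n^k(x)$ is an $S_n$-path and lift it to a path $\tilde x_0,\dots,\tilde x_k$ in $\widetilde P$ from a chosen lift $\tilde x_0$ of $x$, normalised to layer $0$; this lift exists and is unique by the remark above. Each step changes the layer by at most $1$, so the layer of $\tilde x_k$ lies in $[-k,k]$; on the other hand $\tilde x_k$ is a lift of $R_n^k x$, so its layer is congruent to $k$ modulo $n$. Because $k<n/2$, the only such value in $[-k,k]$ is $k$ itself, which forces every step to raise the layer by exactly $1$. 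But an ``up by one'' step of $\widetilde S$ requires the $P$-coordinate to strictly decrease, so the $P$-coordinates along the lift strictly decrease; since $\tilde x_0$ and the layer-$k$ lift of $R_n^k x$ both have the $P$-coordinate of $x$, this yields $p<p$, a contradiction.

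The main obstacle is entirely in this last item, and it is really two points needing care: justifying that an $S_n$-path lifts uniquely to the cover (which is where $n\ge 3$, and more precisely $k<n/2$, enters — for $n=2$ the window would wrap all the way around and the lift would be ambiguous), and the modular-arithmetic step pinning the terminal layer to exactly $k$. Once these are in place, the strictly-decreasing-$P$-coordinate observation closes the argument, while the first two items become routine case-checks that the cover makes essentially immediate.
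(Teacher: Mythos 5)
Your proof is correct and is essentially the paper's own argument: the description of $S_n$ (same layer: no constraint; adjacent layers: the higher point has strictly smaller $P$-coordinate), the pinning of the index path forced by $k<n/2$, and the strictly decreasing $P$-coordinates along forward steps are exactly the steps of the paper's proof, which works directly in $C_n(P)$ with cyclic distance on the indices. The only difference is organizational: you package the bookkeeping as path-lifting in the cover $P\times\bZ$, which turns the paper's case analysis for symmetry and its triangle-inequality argument into statements that are visible by inspection, but the underlying combinatorics is identical.
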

	\begin{proof}
		It is enough to show that the statements are true in $C_n(P)$. Fix any $(p,i),(q,j)\in C_n(P)$.
		
		For the first statement, just note that $C(R_n^{-1}(p,i),R_n^{i-j}(q,j),R_n(p,i))$.
		
		For the second one, suppose $C(R_n^{-1}(p,i),(q,j),R_n(p,i))$. We need to show that $C(R_n^{-1}(q,j),(p,i),R_n(q,j))$ (the other implication is symmetric). If $i=j$, the conclusion is clear, so suppose $i\neq j$. Note that it implies that $\lvert i-j\rvert=1$ (or one of them is $1$, and the other is $n$). For simplicity, suppose $i\geq 2$, $j=i+1$ and $j<n$ (the other cases are similar) .
		
		Under those assumptions, $R_n^{-1}(p,i)=(p,i-1)$, $R_n(p,i)=(p,i+1)$ and $(q,j)=(q,i+1)$, so we have $C((p,i-1),(q,i+1),(p,i+1))$. By definition of $C$, this means that $q<p$, which clearly implies $C((q,i),(p,i),(q,i+2))$. Since we also have $R_n^{-1}(q,j)=(q,i)$ and $R_n(q,j)=(q,i+2)$, this means that $S_n((q,j),(p,i))$.
		
		For the third statement, we may assume without loss of generality that $i=1$. We need to show that $\neg S_n^k((p,1),R_n^k(p,1))$. Since $k<n/2$, $R_n^{k}(p,1)=(p,k+1)$. Suppose towards contradiction that we have $p=p_0,p_1,\ldots, p_k=p$ and $1=i_0,i_1,\ldots i_k=k+1$ such for each $j<k$ we have $S_n((p_j,i_j),(p_{j+1},i_{j+1}))$. Clearly, this implies that $d(i_j,i_{j+1})\leq 1$ (where $d$ is the cyclic distance). Since $d(i_0,i_k)=k$, by triangle inequality, we must have $d(i_j,i_{j+1})=1$, and since $k<n/2$, it follows that for each $j$ we must have $i_j=j+1$. On the other hand, it is easy to see that $S_n((p_j,j+1),(p_{j+1},j+2))$ implies that $C((p_j,j+1),(p_{j+1},j+2),(p_j,j+2))$, which holds only when $p_j>p_{j+1}$. But then by induction $p_0>p_k$, which contradicts the assumption that $p_k=p_0$.
%
%
	\end{proof}
	
	\begin{lem}
		\label{lem:basic_distance}
		For every $a,b\in \fC_n$, if $\Lascd(a,b)\leq 1$, then $S_n^2(a,b)$. More generally, if $\Lascd(a,b)\leq k$, then $S_n^{2k}(a,b)$.
	\end{lem}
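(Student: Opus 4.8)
The plan is to prove the base case $k=1$ first and then obtain the general statement by concatenation. The whole argument rests on two features of $S_n$ recorded in Proposition~\ref{prop:props_of_Tn}: it is symmetric (second bullet), and it is ``coarse'' in the sense that for any $x,y$ some rotation $R_n^i(y)$ is $S_n$-adjacent to $x$ (first bullet). These let me bound the Lascar distance by inserting a suitable relay point drawn from the model over which $a$ and $b$ agree.

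So suppose $\Lascd(a,b)\le 1$. By the definition of Lascar distance this means there is a small model $M\preceq\fC_n$ with $\tp(a/M)=\tp(b/M)$. I would produce a single $c\in M$ that is $S_n$-adjacent to both $a$ and $b$, which then witnesses $S_n^2(a,b)$. To find it, pick any $y\in M$ (possible since $M$ is a model, hence nonempty). As $R_n$ is a function symbol of the language, $M$ is closed under $R_n$, so every rotation $R_n^i(y)$ lies in $M$. By the first bullet of Proposition~\ref{prop:props_of_Tn} there is some $1\le i\le n$ with $S_n(a,R_n^i(y))$; set $c:=R_n^i(y)\in M$.

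Now the formula $S_n(x,c)$ has its parameter $c$ in $M$, so $\tp(a/M)=\tp(b/M)$ forces $S_n(a,c)\iff S_n(b,c)$. Since $S_n(a,c)$ holds, so does $S_n(b,c)$, and by symmetry of $S_n$ (second bullet of Proposition~\ref{prop:props_of_Tn}) we get $S_n(c,b)$. Thus $S_n(a,c)$ and $S_n(c,b)$ both hold, i.e.\ $S_n^2(a,b)$, proving the base case. For the general statement, if $\Lascd(a,b)\le k$ then by definition there is a chain $a=c_0,c_1,\dots,c_k=b$ with $\Lascd(c_{j-1},c_j)\le 1$ for each $j$; the base case gives $S_n^2(c_{j-1},c_j)$, and concatenating these length-$2$ $S_n$-paths yields a path of length $2k$ from $a$ to $b$, that is $S_n^{2k}(a,b)$.

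I do not expect a serious obstacle here; the only point requiring care is that the relay $c$ must genuinely lie in $M$, since that is what makes equality of types over $M$ transfer $S_n$-adjacency from $a$ to $b$. This is supplied precisely by the closure of $M$ under $R_n$ together with the covering property of the first bullet, and the final symmetrisation uses the second bullet. Everything else is routine bookkeeping with the definition of Lascar distance.
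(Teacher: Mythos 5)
Your proof is correct and follows essentially the same route as the paper's: use the covering property (first bullet of Proposition~\ref{prop:props_of_Tn}) to find a rotate of a base point that is $S_n$-adjacent to $a$, transfer this to $b$ via equality of types, symmetrise, and concatenate for general $k$. The only cosmetic difference is that the paper works over a single parameter $c$ and treats $R_n^i(c)$ as a term (so the formula $S_n(x,R_n^i(c))$ lies in $\tp(a/c)$), whereas you keep the relay point inside $M$ using closure of $M$ under the function symbol $R_n$; these are interchangeable.
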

	\begin{proof}
		For the first part, suppose that for some $c\in \fC_n$, $\tp(a/c)= \tp(b/c)$. By the preceding proposition, we have for some $i$ that $S_n(a,R_n^i(c))$. But then $S_n(x,R_n^i(c))\in \tp(a/c)$. It follows that $S_n(b,R_n^{i}(c))$, so by symmetry $S_n^2(a,b)$. In particular, if for some $M\preceq \fC_n$ we have $\tp(a/M)=\tp(b/M)$, then $S_n^2(a,b)$.
		
		The second part immediately follows: if $\Lascd(a,b)\leq k$, then we have a sequence $a_0=a,a_1,\ldots,a_n=b$ with $\Lascd(a_j,a_{j+1})\leq 1$, so by the first part, $S_n^2(a_j,a_{j+1})$, which clearly implies that $S_n^{2k}(a,b)$ holds.
	\end{proof}
	
	\begin{lem}
		\label{lem:main_cyclic_diameter}
		If $P$ is a weakly cyclically three-splitting poset, then for any $p_0\in C_n(P)$ and any $k<n/2$, we have that $\Lascd((p_0,1),R_n^k(p_0,1))> k/2$ and $(p_0,1)\equivLasc R_n^k(p_0,1)$.
	\end{lem}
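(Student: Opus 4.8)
The statement has two halves: the lower bound $\Lascd((p_0,1),R_n^k(p_0,1))>k/2$ and the finiteness $(p_0,1)\equivLasc R_n^k(p_0,1)$. The lower bound is a routine combination of what is already available. I first note that $S_n$ is reflexive (for $n\geq 3$ the triple $C(R_n^{-1}(x),x,R_n(x))$ holds), so $S_n^j\subseteq S_n^{j'}$ whenever $j\leq j'$. Now if $\Lascd((p_0,1),R_n^k(p_0,1))=m$, then Lemma~\ref{lem:basic_distance} gives $S_n^{2m}((p_0,1),R_n^k(p_0,1))$; were $m\leq k/2$, i.e.\ $2m\leq k$, monotonicity would upgrade this to $S_n^k((p_0,1),R_n^k(p_0,1))$, contradicting the third item of Proposition~\ref{prop:props_of_Tn} (applicable since $k<n/2$). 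Hence $m>k/2$.

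For the finiteness I would reduce everything to a single $R_n$-step. Since $R_n$ is $\emptyset$-definable and bijective it preserves $\equivLasc$, so once $(p_0,1)\equivLasc R_n(p_0,1)$ is known, applying powers of $R_n$ and using transitivity yields $(p_0,1)\equivLasc R_n^k(p_0,1)$ for every $k$ (with $\Lascd\leq 3k$, in particular finite). Thus it suffices to connect $(p_0,1)$ to $R_n(p_0,1)$. Writing $P^{\oplus 3}=B_1\oplus B_2\oplus B_3$ for the three copies of $P$, the plan is to pass through the copies of the point $p_0$ lying in $B_1$, then $B_2$, then $B_3$ (all in the first layer), and finally $B_1$ one layer up, which is exactly $R_n(p_0,1)$. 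Each of these three ``block rotations'' I would show to have Lascar distance at most $1$, witnessed by the single-block copy $C_n(B_j)$ corresponding to the block containing neither endpoint; this is an elementary substructure of $\fC_n$ by cyclical three-splitting together with the conjugacy of the three copies of $C_n(P)$.

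The heart of the matter is establishing, for each step, equality of types over $C_n(B_j)$, and here the elementarity of the \emph{adjacent-pair} copies is essential. For the first step, to see $\tp(p_0^{B_1}/C_n(B_3))=\tp(p_0^{B_2}/C_n(B_3))$, observe that the two endpoints lie in the same gap of $C_n(B_3)$, and that there is an explicit isomorphism between the adjacent-pair substructures $C_n(B_3\cup B_1)$ and $C_n(B_3\cup B_2)$ fixing $C_n(B_3)$ pointwise and sending $p_0^{B_1}$ to $p_0^{B_2}$ (it simply relabels the single block occupying each gap of $C_n(B_3)$ in the respective substructure). Since both $C_n(B_3\cup B_1)$ and $C_n(B_3\cup B_2)$ are elementary in $\fC_n$ (again by cyclical three-splitting plus conjugacy), this isomorphism transfers to an equality of complete types over $C_n(B_3)$ in $\fC_n$. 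Crucially, only elementarity of the embeddings is used, so no quantifier elimination for $\Th(C_n(P))$ is required — this is what makes the argument uniform in an arbitrary (weakly) cyclically three-splitting $P$. The other two steps, with $B_1$ and $B_2$ as the fixed block, are handled identically.

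The main obstacle I anticipate is the bookkeeping of cyclic positions. One must verify that the block-relabelling maps really are isomorphisms of the adjacent-pair structures — morally they realize the block-rotation $\rho$ with $\rho^3=R_n$ on the moved point — and, above all, that the three steps compose to exactly $R_n$ rather than some other power: the cyclic ``seam'' between $B_3$ and $B_1$ breaks the apparent symmetry of the layer indices (the gap of $C_n(B_2)$ straddles two different layers), so the copy-shift of each step must be tracked carefully to confirm the net shift is $1$. By contrast, the lower bound and the propagation along $R_n$ are entirely routine.
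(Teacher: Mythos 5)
Your proposal is correct and takes essentially the same route as the paper: the lower bound is the same combination of Lemma~\ref{lem:basic_distance} with Proposition~\ref{prop:props_of_Tn}, and your three block-rotation steps --- isomorphisms between adjacent-pair copies of $C_n(P^{\oplus 2})$ (elementary in $\fC_n$ by cyclic three-splitting plus conjugacy) fixing the remaining single-block copy pointwise --- are precisely the paper's partial elementary maps $\sigma_j$, which compose to $R_n$ on the first block and give $\Lascd\le 3$ per step. The only cosmetic differences are that you make explicit the reflexivity/monotonicity of $S_n$ implicitly used in the lower bound, and that you iterate a single $R_n$-step via $\emptyset$-definability of $R_n$ where the paper directly chains the bound $\Lascd((p_0,1,i),R_n(p_0,1,i))\le 3$ over all $i$.
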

	\begin{proof}
		By Proposition~\ref{prop:props_of_Tn}, we have $\neg S_n^k((p_0,1),R_n^k(p_0,1))$, and hence, by Lemma~\ref{lem:basic_distance}, $2\Lascd((p_0,1),R_n^k(p_0,1))>k$, so $\Lascd((p_0,1),R_n^k(p_0,1))>k/2$. It remains to show that $(p_0,1)\equivLasc R_n^k(p_0,1)$, i.e.\ that $\Lascd((p_0,1),R_n^k(p_0,1))$ is finite.
		
		Enumerate $P^{\oplus 3}$ naturally as $(p,j)_{p\in P,j\in \{1,2,3\}}$, and for each $j=1,2,3$ let $P_j=\{(p,j))\mid {p\in P}\}$, and let $P_{j,2}=\{(p,j'))\mid {p\in P\land j'\neq j}\}$. For brevity, we will use the convention that if $j=1$, then $j-1=3$ and if $j=3$, then $j+1=1$. We will also write simply $(p,j,i)$ for $((p,j),i)\in C_n(P^{\oplus 3})$. Note that for each $j$ we have $C_n(P_{j,2})=C_n(P_{j-1})\cup C_n(P_{j+2})$.
		
		By assumption, each $C_n(P_j)\preceq C_n(P^{\oplus 3})$. So we may identify $C_n(P)$ with $C_n(P_1)$ in such a way that (for each $p$) $(p,i)=(p,1,i)$. We will show that $\Lascd((p_0,1),R_n^k(p_0,1))=\Lascd((p_0,1,1),R_n^k(p_0,1,1))\leq 3k$.

		Let $\sigma$ be the automorphism of $C_n(P^{\oplus 3})$ given by
		\[
			\sigma(p,j,i)=\begin{cases}
			(p,j+1,i) & \textrm{if }j<3\\
			(p,1,i+1) &\textrm{if }j=3\textrm{ and }i<n \\
			(p,1,1) & \textrm{if }j=3\textrm{ and }i=n
			\end{cases}
		\]
		Note that $\sigma$ restricts to isomorphisms $C_n(P_1)\to C_n(P_2)$, $C_n(P_2)\to C_n(P_3)$ and $C_n(P_3)\to C_n(P_1)$, and also that $\sigma^3=R_n\restr_{C_n(P^{\oplus 3})}$.
		
		Now, for each $j=1,2,3$, we put a map $\sigma_j\colon C_n(P_{j,2})\to C_n(P_{j-1,2})$ given by identity on $C_n(P_{j-1})$ and by $\sigma$ on $C_n(P_{j+1})$. Note that each $\sigma_j$ is an isomorphism.
		
		Then, since $C_n(P_{j,2})$ and $C_n(P_{j-1,2})$ are elementary in $C_n(P^{\oplus 3})$, they are also elementary substructures of $\fC_n$. Hence, $\sigma_j$ are partial elementary maps in $\fC_n$ (as isomorphisms between elementary substructures).
		
		Thus, each $\sigma_j$ can be extended to an automorphism $\sigma_j^*\in \Aut(\fC_n)$, and clearly $\sigma_j^*\in \Aut(\fC_n/C_n(P_{j-1}))$. Now, notice that each $C_n(P_{j-1})\preceq \fC_n$, and furthermore, it is not hard to see that for all $p$ and $i$ we have
		\[
			\sigma^*_2\sigma^*_1\sigma^*_3(p,1,i)=\sigma_2\sigma_1\sigma_3(p,1,i)=\sigma^3(p,1,i)=R_n(p,1,i).
		\] It follows that $\Lascd((p_0,1,i),R_n(p_0,1,i))\leq 3$, and hence $\Lascd((p_0,1,1),R_n^k(p_0,1,1))\leq 3k$.
	\end{proof}
	
	\begin{thm}
		\label{thm:wts_not_hgc}
		If $P$ is a weakly cyclically three-splitting poset (cf.\ Definition~\ref{dfn:wcts}), then $P$ is not hereditarily G-compact.
	\end{thm}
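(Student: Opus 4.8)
The plan is to produce a single reduct of a model of $\Th(P)^{\eq}$ whose theory fails to be G-compact; by the remark following Definition~\ref{dfn:hered_g_comp} this suffices to show that $\Th(P)$ is not hereditarily G-compact. The reduct I would use is the disjoint union $D := \coprod_{n \ge 3} C_n(P)$ of all the finite cyclic structures, with no relations between distinct summands. By Remark~\ref{rem:P_ints_Cn(P)}, each $C_n(P)$ (with its relations $C$ and $R_n$) is interpretable in $P$ without parameters, and hence is carried by finitely many sorts of $P^{\eq}$; collecting these over all $n \ge 3$ and forgetting the remaining, structureless sorts exhibits $D$ as a reduct of $P^{\eq}$. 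It therefore suffices to prove that $\Th(D)$ is not G-compact.

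The per-summand input is Lemma~\ref{lem:main_cyclic_diameter}: for every $n \ge 3$, taking the largest integer $k < n/2$, it produces points $(p_0,1)$ and $R_n^k(p_0,1)$ of $C_n(P)$ with $(p_0,1) \equivLasc R_n^k(p_0,1)$ but $\Lascd\big((p_0,1), R_n^k(p_0,1)\big) > k/2$. Since $k/2 \to \infty$ as $n \to \infty$, the theories $T_n = \Th(C_n(P))$ carry Lascar strong types of arbitrarily large, yet finite, diameter.

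The core step is to transport these lower bounds into $D$, that is, to show that for two elements of a common summand the Lascar distance computed in a monster model of $\Th(D)$ is no smaller than the one computed in $C_n(P)$ alone. The point is that in a pure disjoint union the summands are mutually orthogonal: the structure induced on the $n$-th summand is exactly that of $C_n(P)$, the type of a summand element over any small elementary submodel $M$ is determined by its restriction to $M$ intersected with that summand, and any automorphism of the monster fixing $M$ pointwise restricts to one of the summand fixing $M$ intersected with it. Hence a Lascar chain of length $\ell$ in $D$ joining two points of the $n$-th summand restricts to one of length $\le \ell$ within $C_n(P)$, giving $\Lascd^{D} \ge \Lascd^{C_n(P)}$ on that summand; combined with Lemma~\ref{lem:main_cyclic_diameter} this yields Lascar strong types of unbounded finite diameter in $\Th(D)$. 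By Fact~\ref{fct:gcom_equivalent}, $\Th(D)$ is therefore not G-compact, which completes the argument.

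I expect the orthogonality step of the previous paragraph to be the main obstacle. One must check carefully that forming the disjoint union introduces no new cross-summand definable structure and does not shorten Lascar distances inside a summand; concretely, that the $n$-th summand of a monster model of $\Th(D)$ is saturated and homogeneous enough to serve itself as a monster model of $T_n$, so that the diameter lower bounds of Lemma~\ref{lem:main_cyclic_diameter} transfer verbatim. The upper bound $\Lascd^{D} \le \Lascd^{C_n(P)}$ (extend the witnessing automorphisms by the identity off the summand) is routine and is not actually needed; only the lower bound matters.
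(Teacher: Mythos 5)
Your proposal is correct and takes essentially the same route as the paper: the paper also packages the whole family into a single structure (the many-sorted $(C_n(P))_{n\in\bN}$, which is your disjoint union $D$ up to bookkeeping), applies Lemma~\ref{lem:main_cyclic_diameter} in each sort to obtain Lascar strong types of diameter at least $\lfloor n/4\rfloor$, and concludes by Fact~\ref{fct:gcom_equivalent}; the orthogonality transfer you isolate as the main obstacle is precisely the step the paper leaves implicit. One correction: your closing claim that the upper bound $\Lascd^{D}\le \Lascd^{C_n(P)}$ is ``not actually needed'' is backwards --- Fact~\ref{fct:gcom_equivalent} requires pairs at large but \emph{finite} Lascar distance, i.e.\ the two points must be Lascar equivalent in the monster of $\Th(D)$, and that finiteness is exactly what the upper bound supplies --- though since you do prove it (extending the witnessing automorphisms by the identity off the summand), the argument as written is complete.
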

	\begin{proof}
		Note that $P$ interprets the many-sorted structure $(C_n(P))_{n\in \bN}$. By Lemma~\ref{lem:main_cyclic_diameter}, we can find in each $C_n(P)$ a $\equivLasc$-class of diameter at least $\lfloor n/4\rfloor$. In particular, in $(C_n(P))_{n\in \bN}$ we have Lascar strong types of arbitrarily large Lascar diameter, so by Fact~\ref{fct:gcom_equivalent}, $(C_n(P))_{n\in \bN}$ is not G-compact, so $P$ is not hereditarily G-compact.
	\end{proof}
	
	\begin{rem}
		When $P$ is a dense linear ordering without endpoints, the many-sorted structure $(C_n(P))_{n\in \bN}$ is (up to elementary equivalence) exactly the structure given in \cite{CLPZ01} as an example of a structure with a non-G-compact theory.\xqed{\lozenge}
	\end{rem}
	\subsection*{Linear orders are not hereditarily G-compact}
	In this section, we will show that linear orders are not hereditarily G-compact (thus proving the main result of this paper, Theorem~\ref{thm:lin_not_hgc}). To that end, we will extract a dense or discrete linear order, and then apply Theorem~\ref{thm:wts_not_hgc} and Proposition~\ref{prop:lin_orders_are_wts}.
	
	Throughout the section, by an \emph{interval} in a linear order $K$ we mean a set of the form $(a,b)$, $[a,b]$, $[a,b)$ or $(a,b]$ for some $a,b\in K\cup \{-\infty,+\infty\}$, where $a\leq b$. A discrete linear order is one in which every point is isolated (in the order topology). A dense linear order is one in which every open interval (with distinct endpoints) is nonempty.
	
	Given a linear order $K$, denote by $P(K)$ the set of immediate predecessors in $K$ (i.e.\ elements such $a\in K$ such that for some $b\in K$, $b>a$, the interval $(a,b)$ is empty). Note that $K$ is dense if and only if $P(K)=\emptyset$.
	\begin{prop}
		\label{prop:succ_conv_comp}
		Let $(K,<)$ be an arbitrary linear order. If $I$ is a convex component of $P(K)$ in $K$ (i.e.\ a maximal subset of $P(K)$ which is convex in $K$), then either $I$ is finite or $I$ contains arbitrarily long finite intervals.
	\end{prop}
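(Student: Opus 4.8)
The plan is to exploit the defining property of $P(K)$ together with convexity: every $a\in I\subseteq P(K)$ has an immediate successor $S(a)$ in $K$, and convexity of $I$ forces these successors to stay inside $I$ except possibly at the very top. Concretely, I would first isolate the key lemma: \emph{if $a\in I$ is not the maximum of $I$, then $S(a)\in I$}. Indeed, since $a$ is not maximal there is $a'\in I$ with $a'>a$; because the interval $(a,S(a))$ is empty we must have $S(a)\leq a'$, so $S(a)\in(a,a']\subseteq I$ by convexity. This says that $I$ is ``discrete going up'': every non-maximal element of $I$ has an immediate successor lying in $I$, and hence for such $a$ the set $[a,S^n(a)]=\{a,S(a),\ldots,S^n(a)\}$ is a genuine interval of $K$, contained in $I$, finite of size $n+1$.

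Assuming $I$ is infinite, I would then split on whether $I$ has a maximum. If $I$ has no maximum, every element has an immediate successor in $I$, so fixing any $a\in I$ the iterates $a<S(a)<S^2(a)<\cdots$ all lie in $I$ and the intervals $[a,S^n(a)]$ are finite of unbounded size; this already produces arbitrarily long finite intervals inside $I$.

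The remaining, genuinely delicate case is when $I$ has a maximum $m$. Here I would run the same forward-orbit argument but allow it to terminate: for $a\in I\setminus\{m\}$, either the orbit $a,S(a),S^2(a),\ldots$ never reaches $m$ --- in which case, by the lemma, every iterate stays in $I$ and we again obtain arbitrarily long finite intervals --- or it reaches $m$, which forces $[a,m]$ to be finite (consecutive iterates have nothing between them, so $[a,m]$ is exactly the orbit). So I may assume $[a,m]$ is finite for \emph{every} $a\in I$. The point is that $I$ must then have no minimum: a minimum $\ell$ would give $I=[\ell,m]$, finite, contradicting infinitude. Lacking a minimum, I pick a strictly decreasing sequence $a_0>a_1>a_2>\cdots$ in $I$, and then each $[a_n,m]$ is a finite interval contained in $I$ (by convexity) containing $a_0,\ldots,a_n,m$, hence of size at least $n+2$; this again yields arbitrarily long finite intervals.

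I expect the main obstacle to be precisely this last configuration (maximum but no minimum): the naive idea of iterating the successor ``from the bottom'' has no bottom to start from, and elements of $I$ need not have immediate predecessors, so one cannot simply dualise the successor argument. The trick that resolves it is to notice that the \emph{failure} of the forward-orbit argument --- namely, $[a,m]$ being finite for all $a$ --- is exactly the hypothesis that lets one manufacture long intervals ``from above'' via a descending sequence and convexity. A minor but recurring point to check is that the sets $[a,S^n(a)]$ and $[a_n,m]$ are honestly intervals of $K$, contained in $I$, and finite; all three facts follow from convexity of $I$ and the absence of elements strictly between consecutive $S$-iterates.
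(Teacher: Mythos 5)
Your proof is correct, but it is organized differently from the paper's. The paper fixes a single $a\in I$ and analyses the \emph{bidirectional} orbit $S_a$ (everything reachable from $a$ by finitely many successor and predecessor steps): if $S_a$ is infinite, one is done at once; if $S_a$ is finite, with minimum $a_-$, the paper picks some $b\in I$ with $b<a_-$ (using infinitude of $I$) and shows that the upward successor orbit of $b$ can never reach $a_-$ (an element of $[b,a_-)$ whose successor were $\geq a_-$ would in fact be the immediate predecessor of $a_-$, hence would lie in $S_a$, contradicting minimality of $a_-$), so that orbit is infinite and yields the long intervals. You instead case on a global feature of $I$ --- whether it has a maximum $m$ --- and, per element, on whether the upward orbit reaches $m$. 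Your two iteration cases (no maximum; some orbit avoiding $m$) run on the same engine as the paper's, but your critical case (every orbit hits $m$, so every $[a,m]$ is finite) is resolved by a genuinely different device: $I$ can have no minimum, so a strictly descending sequence $a_0>a_1>\cdots$ in $I$ makes the finite intervals $[a_n,m]\subseteq I$ have unbounded size, with no successor iteration at all. In the paper's terms, your case B2 amounts to showing that the bidirectional orbit of $m$ itself is infinite (its downward part contains all the $a_n$), i.e.\ you land in the paper's ``$S_a$ infinite'' branch with $a=m$; conversely, the paper's restart-from-$b$ argument lands in your ``orbit avoiding the maximum'' case. So the two proofs cover the same territory with dual emphases: yours trades the paper's slightly fiddly ``successors stay below $a_-$'' argument for an equally elementary descending-sequence argument, at the cost of a three-way case split. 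One cosmetic point: the claimed size $n+2$ of $[a_n,m]$ needs $a_0<m$, so pick $a_0\neq m$ (possible since $I$ is infinite); otherwise you still get size at least $n+1$, which is all that is needed.
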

	\begin{proof}
		If $I$ is finite, we are done. Suppose, then, that $I$ is infinite.
		
		%
		Take any $a\in I$, and consider the set $S_a$ of all $b\in K$ such that either $b>a$ and $[a,b]$ if finite or $b<a$ and $[b,a]$ is finite (i.e.\ the set of all elements of $K$ that can be reached from $a$ by taking successors and predecessors). Note that $S_a$ is convex  in $K$ and all elements of $S_a$, except for the last one (if it exists) are contained in $I$.
		
		Thus, if $S_a$ is infinite, we are done (because we can find arbitrarily long finite intervals in $I\cap S_a$). So suppose $S_a$ is finite. Then it has a smallest element $a_-$ and a largest element $a^+$. Note that this implies that $a^+\notin P(K)$, so $I\setminus S_a=I\setminus [a_-,a^+]=I\cap (-\infty,a_-)$. Thus, because $I$ is infinite, there is some $b\in I$ such that $b<a_-$. Since (by convexity of $I$) we have $[b,a_-]\subseteq I\subseteq P(K)$, every element of $[b,a_-)$ has a successor. On the other hand, by definition of $a_-$, these successors are always strictly smaller than $a_-$. It follows that $[b,a_-)\subseteq I$ contains arbitrarily long finite intervals.
	\end{proof}
	Using Proposition~\ref{prop:succ_conv_comp}, we can perform the extraction mentioned before.
	\begin{lem}
		\label{lem:linear_reduction}
		If $(L,<)$ is an $\aleph_0$-saturated infinite linear order, then there is an infinite definable set $D\subseteq L$ such that $(D,{<}\restr_D)$ is dense without endpoints or discrete with two endpoints.
	\end{lem}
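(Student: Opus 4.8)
The plan is to split on the behaviour of the definable set $P(L)$ of points having an immediate successor and, dually, of the definable set $P^\ast(L)$ of points having an immediate predecessor. The dichotomy I would use is: \emph{either} some convex component of $P(L)$ is infinite, \emph{or} every convex component of $P(L)$ is finite. In the first case I will extract a discrete set with two endpoints, and in the second a dense set without endpoints. In both cases the real work is done by $\aleph_0$-saturation of $L$, with Proposition~\ref{prop:succ_conv_comp} feeding the discrete case. Parameters are allowed throughout, which is all the intended application requires.

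First, suppose some convex component $I$ of $P(L)$ is infinite. Since $I$ is not finite, Proposition~\ref{prop:succ_conv_comp} supplies arbitrarily long finite intervals $[a,b]\subseteq I$; each such interval is a finite discrete chain with two endpoints $a<b$, and it satisfies $[a,b)\subseteq P(L)$ and $(a,b]\subseteq P^\ast(L)$. I would then consider the partial type $q(x,y)$ over $\emptyset$ asserting that $x<y$, that every point of $[x,y)$ has an immediate successor, that every point of $(x,y]$ has an immediate predecessor, and that $[x,y]$ has at least $m$ elements (one formula for each $m\in\bN$). These finite chains witness that $q$ is finitely satisfiable, so it is realised by some $a<b$ via $\aleph_0$-saturation. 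The set $D=[a,b]$, definable over $\{a,b\}$, is infinite and discrete with two endpoints: every interior point has both an immediate successor (it lies in $[a,b)\subseteq P(L)$) and an immediate predecessor (it lies in $(a,b]$), so it is isolated, while $a$ and $b$ are the endpoints. One should check that these immediate successors and predecessors stay inside $[a,b]$, but that is immediate from the definition of immediate successor.

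Now suppose every convex component of $P(L)$ is finite. Introduce the convex equivalence relation $x\sim y$ meaning that the interval between $x$ and $y$ is finite; its classes (``galaxies'') are convex discrete suborders. I claim every galaxy is finite: an infinite galaxy would have order type $\omega$, $\omega^\ast$ or $\bZ$, and in each case the points of the galaxy possessing an immediate successor form an infinite convex subset of $P(L)$, hence lie in an infinite convex component, contradicting the case hypothesis. Since all galaxies are finite, each has a least element, and a point is the least element of its galaxy exactly when it has no immediate predecessor; thus $D:=L\setminus P^\ast(L)$ is precisely the definable set of galaxy minima, and $d\mapsto[d]_\sim$ is an order isomorphism of $D$ onto the quotient $L/{\sim}$. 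It remains to see that $L/{\sim}$ is densely ordered: given galaxies $G_1<G_2$, the interval $(\max G_1,\min G_2)$ is nonempty (otherwise $\min G_2$ would be an immediate successor of $\max G_1$, forcing $G_1=G_2$) and is a union of galaxies by a routine convexity check, hence contains a third galaxy strictly between $G_1$ and $G_2$. Therefore $D$ is an infinite dense order, and passing to an open subinterval $\{x\in D: a<x<b\}$ for some $a<b$ in $D$ yields a definable dense set without endpoints.

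I expect the dense case to be the main obstacle. One must recognise that the absence of infinite convex components of $P(L)$ forces \emph{all} galaxies to be finite, and then produce a \emph{definable} transversal of the galaxies, namely the points with no immediate predecessor, whose induced order reproduces the dense quotient $L/{\sim}$; the density itself reduces to the ``between any two galaxies there is a third'' argument above. By contrast, the discrete case is comparatively routine once Proposition~\ref{prop:succ_conv_comp} provides arbitrarily long finite chains and $\aleph_0$-saturation upgrades them to an infinite one.
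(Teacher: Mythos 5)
Your proof is correct and follows essentially the same route as the paper: the same dichotomy governed by Proposition~\ref{prop:succ_conv_comp}, with the discrete case handled by realising a two-variable type via $\aleph_0$-saturation and the dense case handled by the definable set of points lacking an immediate neighbour on one side. The remaining differences are cosmetic: the paper splits on whether $L$ contains arbitrarily long finite intervals (so Proposition~\ref{prop:succ_conv_comp} is invoked in the dense case, to conclude all convex components of $P(L)$ are finite, rather than in the discrete case), it works with $L\setminus P(L)$ in place of your dual $L\setminus P^{\ast}(L)$, and it verifies density directly --- showing that an immediate-successor pair in $L\setminus P(L)$ would force a point of $L\setminus P(L)$ into $P(L)$ --- rather than through your galaxy quotient.
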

	\begin{proof}
%
		If $L$ contains arbitrarily long finite intervals, we can find (by $\aleph_0$-saturation) some $a<b$ such that $[a,b]$ is infinite and discrete, and then we are done. So let us assume that $L$ does not contain arbitrarily long finite intervals. We will show that $L':=L\setminus P(L)$ is an infinite dense linear ordering. (If $L'$ has any endpoints, we can just drop them to obtain a dense ordering without endpoints.)
		
		By the preceding paragraph, no convex component of $P(L)$ can contain arbitrarily long finite intervals, so by Proposition~\ref{prop:succ_conv_comp}, all convex components of $P(L)$ are finite.
		
		It follows that $L'$ is an infinite linear order (so in particular, it has at least two elements): indeed, if $L'$ was finite, its complement --- i.e. $P(L)$ --- would have finitely many convex components in $L$. But since we assume that all of them are finite, this would imply that $P(L)$ is finite, and hence so is $L=L'\cup P(L)$, a contradiction.
		
		Now, note that $P(L')=\emptyset$. Otherwise, if $a\in P(L')$, then there is some $b\in P(L')$ such that $a<b$ and $(a,b)\cap L'=\emptyset$. But $(a,b)\cap L'=(a,b)\setminus P(L)$. Thus, $(a,b)\cap P(L)=(a,b)$, so it is a subset of $P(L)$, convex in $L$, and hence finite. But then either $(a,b)=\emptyset$ --- in which case $a$ is the predecessor of $b$ in $L$ --- or there is a minimal element of $(a,b)$, and that element is a successor of $a$ in $L$. In both cases, $a\in P(L)$, which is a contradiction, as $a\in L'=L\setminus P(L)$.
		
		Now, since $P(L')=\emptyset$, it follows that $L'$ is dense, and we are done.
		%
	\end{proof}
	Note that the subset $D$ in the conclusion of Lemma~\ref{lem:linear_reduction} is weakly cyclically three-splitting (by Proposition~\ref{prop:lin_orders_are_wts}). The following remark shows that it is, in this way, the best possible result, as it is not hard to see that a three-splitting poset cannot have a maximum.
	\begin{rem}[by Antongiulio Fornasiero]
		\label{rem:discrete_two_ends}
		One can show that a discrete (pure) linear order with two endpoints does not interpret a linear order without a maximum (because it is definably compact, i.e.\ every uniformly definable family of definable sets with finite intersection property has nonempty intersection; this is preserved by interpretation and also clearly not true about an infinite linear order without a maximum).\xqed{\lozenge}
	\end{rem}
	
	Finally, we can prove the main theorem of this paper.
	\begin{thm}
		\label{thm:lin_not_hgc}
		If $T$ interprets an infinite linear order, then it is not hereditarily G-compact.
	\end{thm}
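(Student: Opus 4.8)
The plan is to chain together the machinery already in place: Lemma~\ref{lem:linear_reduction} extracts a well-behaved definable suborder, Proposition~\ref{prop:lin_orders_are_wts} shows it is weakly cyclically three-splitting, Theorem~\ref{thm:wts_not_hgc} then produces a concrete interpreted structure with non-G-compact theory, and transitivity of interpretation transports this witness back to a model of $T$.

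First I would fix a sufficiently saturated $M\models T$, so that the interpreted linear order $(L,<)$ is infinite and, moreover, $\aleph_0$-saturated. The latter is the crucial point: since $L$ is interpreted in the saturated model $M$ over a small parameter set, any type over a finite subset of $L$ pulls back to a type over a small subset of $M$, which is realised by saturation, and hence $L$ realises it too. With $\aleph_0$-saturation of $L$ in hand, Lemma~\ref{lem:linear_reduction} applies and yields an infinite set $D\subseteq L$, definable (with parameters) in $L$, such that $(D,{<}\restr_D)$ is either dense without endpoints or discrete with two endpoints.

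By Proposition~\ref{prop:lin_orders_are_wts}, in either case $D$ is weakly cyclically three-splitting, so Theorem~\ref{thm:wts_not_hgc} applies to it. Unwinding that theorem, the many-sorted structure $(C_n(D))_{n\in\bN}$ is interpreted in $D$ (Remark~\ref{rem:P_ints_Cn(P)}) and has non-G-compact theory. Now $D$ is definable in $L$, and $L$ is interpreted in $M$; composing these interpretations, $(C_n(D))_{n\in\bN}$ is a structure interpreted (with parameters) in $M$. Since its theory is not G-compact and $M\models T$, the theory $T$ is by definition not hereditarily G-compact.

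I expect the main obstacle to be the saturation transfer in the second step: one must pick $M$ saturated enough that the abstract hypothesis ``$T$ interprets an infinite linear order'' hands us an $\aleph_0$-saturated linear order, which is precisely what Lemma~\ref{lem:linear_reduction} requires. The remaining work is bookkeeping --- verifying that the composite of the three interpretations ($(C_n(D))_{n\in\bN}$ in $D$, $D$ in $L$, $L$ in $M$) is again an interpretation in $M$, which is routine, and noting that the various parameters appearing along the way are harmless because the definition of hereditary G-compactness permits interpretations with parameters.
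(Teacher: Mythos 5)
Your proof is correct and follows essentially the same route as the paper: extract a dense-without-endpoints or discrete-with-endpoints definable suborder via Lemma~\ref{lem:linear_reduction}, apply Proposition~\ref{prop:lin_orders_are_wts} and Theorem~\ref{thm:wts_not_hgc}, and transport the failure of G-compactness back through the interpretation. The only cosmetic difference is that the paper handles the reduction by saying ``without loss of generality $T=\Th(L,<)$ for an $\aleph_0$-saturated $(L,<)$,'' whereas you keep $T$ and transfer $\aleph_0$-saturation of $M$ to the interpreted order $L$ directly --- the same bookkeeping, made explicit.
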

	\begin{proof}
		Without loss of generality, $T=\Th(L,<)$ for some $\aleph_0$-saturated infinite linear $(L,<)$. By Lemma~\ref{lem:linear_reduction}, we know that $L$ contains a definable subset which is either dense without endpoints or discrete with both endpoints. In both cases, by Proposition~\ref{prop:lin_orders_are_wts}, the induced order is weakly cyclically three-splitting, and so, by Theorem~\ref{thm:wts_not_hgc}, its theory is not hereditarily G-compact, and neither is $T$.
	\end{proof}
	
	Recall the following long-standing conjecture about NIP theories.
	\begin{con}
		\label{con:unstable_nip_linear}
		If $T$ is an unstable NIP theory, then $T$ interprets an infinite linear order.\xqed{\lozenge}
	\end{con}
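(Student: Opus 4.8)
The plan is to isolate, from the two hypotheses, the single definable object that an infinite linear order amounts to, and then to promote it from an \emph{external} witness to a genuinely interpretable one. Interpreting an infinite linear order is equivalent to producing an infinite \emph{definable linear quasi-order} in $T^{\eq}$: a definable preorder $\trianglelefteq$ on some definable set $Y$ which is total (any two elements are $\trianglelefteq$-comparable) and has infinitely many classes under the induced equivalence $y\sim y'\iff(y\trianglelefteq y'\wedge y'\trianglelefteq y)$; then $(Y/{\sim},\trianglelefteq)$ is the desired interpreted infinite linear order. So the whole problem reduces to manufacturing such a $(Y,\trianglelefteq)$ out of the combination of instability and NIP. (In the converse direction there is nothing to do: stability is preserved under interpretation and an infinite linear order is unstable, so a stable theory interprets no infinite linear order, which is why this conjecture is the hard half of the dichotomy alluded to in the abstract.)

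First I would extract order from instability. Since $T$ is unstable, some formula $\phi(x;y)$ has the order property, so there are an indiscernible sequence $(b_j)_{j\in\bN}$ and tuples $(a_i)$ with $\models\phi(a_i;b_j)\iff i\le j$; in particular the $\phi$-definable sets $\phi(\fC;b_j)$ are strictly increasing under inclusion. This singles out the canonical candidate quasi-order on the $y$-sort,
\[
y\trianglelefteq y'\quad:\iff\quad \forall x\,(\phi(x;y)\rightarrow\phi(x;y')),
\]
i.e.\ inclusion of $\phi$-definable sets. It is reflexive and transitive, hence a definable preorder, and by construction it \emph{linearly} orders the witnessing sequence $(b_j)$. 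Thus instability alone already yields a definable preorder together with an infinite externally $\trianglelefteq$-ordered chain.

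The role of NIP is to remove the two obstructions that prevent $\trianglelefteq$ from being a linear order on a definable infinite set: the chain $(b_j)$ is only an indiscernible sequence, not a definable set, and $\trianglelefteq$ may a priori carry large antichains of pairwise incomparable $\phi$-definable sets. Here the tame combinatorics must enter. Under NIP the family $\{\phi(\fC;y)\}$ has finite VC dimension and bounded alternation, honest definitions are available, and one has a robust theory of invariant types and Morley sequences (cf.\ Simon's work on NIP). The strategy is to replace $(b_j)$ by a global $\Aut(\fC)$-invariant type $p$ in the $y$-sort (finitely satisfiable or definable) whose Morley sequence realises the same $\phi$-ordering; since $T$ is unstable \emph{via} $\phi$, such a $p$ can be taken \emph{non}-generically-stable, so its Morley sequence is genuinely directed by $\trianglelefteq$ rather than set-like, and the finite-alternation input should confine the failures of comparability to a definable piece, yielding an infinite definable $Y$ on which $\trianglelefteq$ is total.

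I expect this last step to be the main obstacle, and indeed it is essentially the entire mathematical content of the conjecture, which is why it remains open: producing a genuinely \emph{definable} infinite set that $\trianglelefteq$ orders \emph{linearly} — rather than merely an externally given indiscernible chain — is exactly what resists the general case. The difficulty is that NIP controls the combinatorics only \emph{locally} (finite VC dimension, honest definitions over finite sets), whereas a definable linear quasi-order is a \emph{global} object, and there is no general mechanism passing from local tameness to a global definable order. That IP, not instability per se, is the true obstruction is shown by unstable $\aleph_0$-categorical examples such as the random graph, which is unstable yet interprets no infinite linear order. Accordingly, the realistic deliverable is not a full proof but the partial results obtained by pushing this program through under extra hypotheses — most notably Simon's theorem that every unstable $\aleph_0$-categorical NIP theory interprets an infinite linear order (cited in the abstract), where finiteness of the type space lets one carry out the localisation in the final step unconditionally.
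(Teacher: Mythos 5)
You have not proved the statement, but no one has: what you were given is Conjecture~\ref{con:unstable_nip_linear}, which the paper explicitly presents as a long-standing \emph{open} conjecture. There is no proof in the paper to compare against; the paper only records the known partial results, namely that the conjecture holds for $\aleph_0$-categorical theories, and that for a general unstable NIP theory one gets a linear quasi-order on a definable set that is merely $\bigvee$-definable, with an infinite chain (\cite[Theorems 5.12, 5.13]{Sim19}). Your proposal correctly recognises this status rather than claiming a proof, and your diagnosis of where the program breaks is exactly right and exactly where the literature stands: the reduction of ``interprets an infinite linear order'' to ``admits an infinite definable linear quasi-order'' is correct (quotient by the induced equivalence), and the irreducible difficulty is promoting the externally ordered or $\bigvee$-definable data to a genuinely \emph{definable} total preorder. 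In the $\aleph_0$-categorical case this promotion is automatic since $\bigvee$-definable relations are definable by Ryll-Nardzewski, which is precisely how the paper's unconditional corollary is derived; your closing paragraph matches this.

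One concrete mathematical slip in your middle step: from the order property $\models\phi(a_i;b_j)\iff i\leq j$ it does \emph{not} follow that the sets $\phi(\fC;b_j)$ are nested, nor that your inclusion preorder $y\trianglelefteq y'\iff\forall x\,(\phi(x;y)\rightarrow\phi(x;y'))$ linearly orders the sequence $(b_j)$ --- only the traces of these sets on $\{a_i: i\in\bN\}$ are increasing, and that set is not definable. This is not a pedantic point; it is the very reason the known construction produces only a $\bigvee$-definable quasi-order (one must take a directed union of approximations to the trace order rather than plain inclusion), so your sketch as written understates how early the definability problem intrudes. Also, a small caution on your random graph remark: the example does show that instability alone yields no interpretable order (the random graph is simple, and simplicity, being preserved under interpretation, excludes interpreting a linear order, which has SOP), but this shows the NIP hypothesis is necessary for the conjecture's conclusion-from-instability, not that ``IP is the obstruction'' in any mechanism-level sense. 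With those corrections, your write-up is an accurate account of the state of the art on an open problem, which is the most that can honestly be delivered here.
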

	
	It is known that the conjecture holds for $\aleph_0$-categorical structures. For arbitrary unstable NIP theories, it is known that there exists a $\biglor$-definable linear quasi-ordering with an infinite chain (on some definable set); see \cite[Theorems 5.12, 5.13]{Sim19}.
	
	(In, \cite{GL13}, the authors showed also that an unstable theory interprets an infinite linear order under a rather strong assumption of weak VC-minimality.)
%
	
	Using the conjecture, we can formulate the following conditional corollary.
	\begin{cor}
		\label{cor:NIP+SOP_nhgc}
		If Conjecture~\ref{con:unstable_nip_linear} holds, then every theory which is both NIP and SOP is not hereditarily G-compact.
	\end{cor}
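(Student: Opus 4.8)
The plan is to chain together the assumed Conjecture~\ref{con:unstable_nip_linear} with the Main Theorem (Theorem~\ref{thm:lin_not_hgc}), the only extra ingredient being the observation that the strict order property forces instability. So I would begin by letting $T$ be an arbitrary theory which is both NIP and SOP, and recall the classical theorem (due to Shelah) that a complete theory is stable if and only if it is both NIP and NSOP; equivalently, the strict order property implies the order property, and hence instability. Since $T$ has SOP, it is therefore not stable, and being also NIP, it is an unstable NIP theory.

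At this point the conclusion is essentially immediate. Assuming Conjecture~\ref{con:unstable_nip_linear}, the unstable NIP theory $T$ interprets an infinite linear order, and then Theorem~\ref{thm:lin_not_hgc} applies verbatim to give that $T$ is not hereditarily G-compact, which is exactly what we want.

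Since the argument is a direct deduction, there is no genuine obstacle in the proof itself; all of the mathematical content has already been absorbed into the conjecture and into the Main Theorem. The single point worth flagging is the step from SOP to instability, which is standard and can be cited rather than reproved (SOP $\Rightarrow$ OP $\Rightarrow$ unstable). I would note in passing that one could equally state the corollary with \emph{unstable} in place of SOP, since for NIP theories the two are equivalent; the SOP phrasing is preferable here because it ties the result to the NSOP discussion and to Question~\ref{qu:SOP_nhgc}.
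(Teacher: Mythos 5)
Your proof is correct and follows exactly the route the paper intends: the paper's own proof is just the single line ``Immediate by Theorem~\ref{thm:lin_not_hgc}'', with the chain SOP $\Rightarrow$ unstable (Shelah), then Conjecture~\ref{con:unstable_nip_linear}, then the Main Theorem left implicit. You have merely spelled out what the paper compresses, so there is nothing to change.
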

	\begin{proof}
		Immediate by Theorem~\ref{thm:lin_not_hgc}.
	\end{proof}
	As stated before, Conjecutre~\ref{con:unstable_nip_linear} actually holds for $\aleph_0$-categorical theories, so we can say the following unconditionally.
	\begin{cor}
		An $\aleph_0$-categorical NIP theory is stable if and only if it is hereditarily G-compact.
	\end{cor}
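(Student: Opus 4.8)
The plan is to establish the two implications separately, noting that only the converse direction draws on the Main Theorem of this paper.

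First I would dispatch the easy direction, stable $\Rightarrow$ hereditarily G-compact. This is immediate from Example~\ref{ex:simple_is_hgc}: stability is preserved under interpretation, and every stable theory is G-compact by Fact~\ref{fct:stable_simple_gc}, so every stable theory is in fact hereditarily $1$-G-compact. Observe that neither NIP nor $\aleph_0$-categoricity is needed for this implication; it holds for every stable theory, and in particular for every $\aleph_0$-categorical NIP one.

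For the converse, hereditarily G-compact $\Rightarrow$ stable, I would argue by contraposition: assuming $T$ is $\aleph_0$-categorical, NIP, and \emph{unstable}, I want to conclude that $T$ is not hereditarily G-compact. The crucial input is that Conjecture~\ref{con:unstable_nip_linear} is known \emph{unconditionally} in the $\aleph_0$-categorical case (by the result of Simon recalled just before the statement): in this setting one genuinely obtains an \emph{interpreted} infinite linear order, rather than merely a $\bigvee$-definable linear quasi-order with an infinite chain as is known for general unstable NIP theories. Hence an unstable NIP $\aleph_0$-categorical $T$ interprets an infinite linear order, and Theorem~\ref{thm:lin_not_hgc} then immediately yields that $T$ is not hereditarily G-compact, completing the contrapositive.

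Since both implications reduce to results already in hand, there is no substantive obstacle to overcome; the entire content of the corollary lies in combining the Main Theorem with the $\aleph_0$-categorical instance of the conjecture. The only points requiring care are essentially bookkeeping: that \emph{stable} and \emph{unstable} are used as exact negations inside the NIP context, and that the cited form of Simon's theorem indeed produces an interpreted infinite linear order (and not just the weaker $\bigvee$-definable quasi-order) for unstable NIP $\aleph_0$-categorical theories, so that Theorem~\ref{thm:lin_not_hgc} applies directly.
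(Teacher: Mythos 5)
Your proof is correct and follows essentially the same route as the paper: the easy direction via Fact~\ref{fct:stable_simple_gc}, and the converse by contraposition, combining the $\aleph_0$-categorical instance of Conjecture~\ref{con:unstable_nip_linear} with Theorem~\ref{thm:lin_not_hgc}. The only difference is one of detail: where you cite the $\aleph_0$-categorical case of the conjecture as a known black box, the paper briefly unpacks it, invoking Simon's theorem to get a dense linear quasi-order $\bigvee$-definable over a finite set and observing that $\aleph_0$-categoricity upgrades $\bigvee$-definability to definability, whence an infinite linear order is interpreted.
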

	\begin{proof}
		In one direction, this follows from Fact~\ref{fct:stable_simple_gc}. In the other direction, by \cite[Theorem 5.12]{Sim19}, we have a definable set $X$ and a dense linear quasi-order $R$, $\bigvee$-definable over a finite set $A$. By $\aleph_0$-categoricity, it follows that $R$ is actually definable, which allows us to interpret an infinite linear order. The conclusion follows by Theorem~\ref{thm:lin_not_hgc}.
	\end{proof}
	
	It might be interesting to see if the results of \cite{Sim19} can be used to prove Corollary~\ref{cor:NIP+SOP_nhgc} unconditionally. \cite[Theorem 5.12]{Sim19} does give us dense linear quasi-order, the main issue is that since the structure is not just a pure order, there seems to be no obvious reason for it to be self-additive (with the whole structure necessary to make the quasi-order $\biglor$-definable), even if there are no endpoints. If one could find some sort of canonical reduct or definable subset with that property (or at least the analogue of weak cyclic three-splitting), then it seems like the proof of Theorem~\ref{thm:wts_not_hgc} could be adapted to show the lack of hereditary G-compactness.

	\section{Criteria for (hereditary) G-compactness}
	\label{sec:criteria}
	
	\subsection*{G-compactness is finitary}
	In this section, we show that in some ways, G-compactness is finitary. This might be helpful in proving hereditary G-compactness.
	
	\begin{rem}
		If we have two languages $L_0\subseteq L$ and a monster model $\fC$ for both $L_0$ and $L$, then for any $a,b\in \fC$ we have trivially that $\Lascd(a,b)\leq n$ in $\fC|_L$ implies the same in the sense of $\fC|_{L_0}$. Consequently, any $L_0$-formula implied by $\Lascd(x,y)\leq n$ in the sense of $\fC|_L$ is also implied by $\Lascd(x,y)\leq n$ in the sense of $\fC|_{L_0}$.
	\end{rem}
	
	\begin{prop}
		\label{prop:finite_sublanguage}
		Let $T$ be a first order theory in language $L$. If for every finite $L_0\subseteq L$ we have that $T\restr_{L_0}$ is $n$-G-compact, then $T$ is $n$-G-compact.
	\end{prop}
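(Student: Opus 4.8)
The plan is to reduce to the case of finite tuples via Fact~\ref{fct:gcom_equivalent}, and then glue together the witnesses coming from the various finite sublanguages by a saturation (compactness) argument carried out in a fixed monster model $\fC$ of $T$, with language $L$. First I would invoke Fact~\ref{fct:gcom_equivalent}: to prove that $T$ is $n$-G-compact it suffices to check that for all \emph{finite} tuples $a,b$, $\Lascd(a,b)\le n+1$ implies $\Lascd(a,b)\le n$. So fix finite tuples $a,b\in\fC$ with $\Lascd(a,b)\le n+1$, where $\Lascd$ is computed in $L$. For every finite $L_0\subseteq L$, the reduct $\fC\restr_{L_0}$ is a monster model of $T\restr_{L_0}$, and by the preceding Remark we have $\Lascd_{L_0}(a,b)\le\Lascd(a,b)\le n+1<\infty$; since $T\restr_{L_0}$ is $n$-G-compact by hypothesis, this yields $\Lascd_{L_0}(a,b)\le n$. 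Thus for each finite $L_0$ we obtain an $L_0$-chain witnessing distance $\le n$: intermediate tuples $a=c^{L_0}_0,\ldots,c^{L_0}_n=b$ together with $L_0$-elementary substructures $N^{L_0}_1,\ldots,N^{L_0}_n\preceq\fC\restr_{L_0}$ (which may be taken of size $\le\lvert T\rvert$) such that $c^{L_0}_{i-1}\equiv^{L_0}_{N^{L_0}_i}c^{L_0}_i$ for each $i$.

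Next I would express ``there is an $L$-chain of length $n$ from $a$ to $b$'' as a small partial type $\Pi$ over $ab$, in variables naming the $n-1$ intermediate finite tuples $c_1,\ldots,c_{n-1}$ and enumerations $\bar m_1,\ldots,\bar m_n$ (of some fixed small length $\ge\lvert T\rvert$) of the witnessing models. The type $\Pi$ asserts, with $c_0:=a$ and $c_n:=b$, that each $\bar m_i$ enumerates an $L$-elementary substructure of $\fC$ and that $c_{i-1}\equiv_{\bar m_i}c_i$ for every $i$. The key point is finite satisfiability in $\fC$: any finite fragment $\Pi_0\subseteq\Pi$ mentions only finitely many $L$-formulas and parameters, hence is contained in a finite sublanguage $L_0\subseteq L$; the corresponding $L_0$-chain $(c^{L_0}_i,N^{L_0}_i)$ from the previous paragraph satisfies all of $\Pi_0$, because every elementary-substructure condition and every type-equality occurring in $\Pi_0$ is an $L_0$-condition, the $N^{L_0}_i$ are $L_0$-elementary, and $c^{L_0}_{i-1}\equiv^{L_0}_{N^{L_0}_i}c^{L_0}_i$ (after enlarging $L_0$ to contain each formula appearing in $\Pi_0$ and enumerating each $N^{L_0}_i$ so as to include the finitely many relevant Skolem witnesses). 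By saturation of $\fC$, the small type $\Pi$ is realized, giving a genuine $L$-chain of length $n$ and hence $\Lascd(a,b)\le n$, as required.

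The main obstacle is precisely this compactness step, and it has two delicate aspects. One must set up $\Pi$ so that ``$\bar m_i$ enumerates an elementary substructure'' is a \emph{bona fide} partial type rather than an infinite disjunction; this is done in the standard way, by assigning to each pair $(\phi,\bar\jmath)$ a designated witness coordinate and including the single-formula condition $\exists x\,\phi(x,\bar m_{i,\bar\jmath})\limplies\phi(m_{i,w(\phi,\bar\jmath)},\bar m_{i,\bar\jmath})$. One must then observe the crucial asymmetry that, although each $N^{L_0}_i$ is only $L_0$-elementary, the finite fragment $\Pi_0$ only ever tests $L_0$-instances of elementarity and of type-equality, so the genuinely $L$-elementary models materialise only in the limit realisation. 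Equivalently, one may run the same gluing through a fine ultrapower $\prod_{\mathcal U}\fC$ over the directed set of finite sublanguages of $L$: there the Tarski--Vaught test upgrades each $L_0$-elementary $N_i$ to an $L$-elementary substructure of the ultrapower and turns the collection of $L_0$-chains into a single $L$-chain, which one then reflects back into $\fC$ by saturation (the whole chain being small data realising a small type over $ab$).
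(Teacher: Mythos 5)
Your proof is correct, but it takes a genuinely different route from the paper's. The paper argues by \emph{contraposition}: assuming $T$ is not $n$-G-compact, it cites as known that $\Lascd(x,y)\leq n$ is type-definable over $\emptyset$, extracts by compactness a single formula $\varphi$ with $\Lascd(x,y)\leq n\vdash \varphi(x,y)$ and $\not\models\varphi(a,b)$, and then restricts to the finite language $L_0$ of $\varphi$, invoking the preceding Remark (that an $L_0$-formula implied by $\Lascd(x,y)\leq n$ in the sense of $\fC\restr_L$ is implied by $\Lascd(x,y)\leq n$ in the sense of $\fC\restr_{L_0}$) to conclude that $T\restr_{L_0}$ is not $n$-G-compact. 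You instead prove the implication directly: reduce to finite tuples via Fact~\ref{fct:gcom_equivalent}, write out explicitly the partial type $\Pi$ asserting the existence of an $L$-chain of length $n$ (designated Tarski--Vaught witness coordinates plus type-equality conditions), and verify finite satisfiability using the chains supplied by the finite sublanguages. In effect you re-prove, rather than cite, the type-definability of $\Lascd(x,y)\leq n$; moreover, your key observation --- that an $L_0$-elementary chain satisfies every $L_0$-instance of $\Pi$ --- is essentially a proof of the paper's Remark, whose ``consequently'' clause is precisely the non-trivial point (the inclusion between the two distance relations goes the wrong way for that clause to be a formal consequence of the first sentence of the Remark). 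The paper's route buys brevity and modularity; yours buys self-containedness, makes every use of compactness and saturation visible, and keeps the quantitative content (the same $n$) manifest. Your reduction to finite tuples is legitimate but not actually needed: the same gluing works verbatim for arbitrary small tuples, since $\Pi$ is still a small type over a small set.

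One detail to nail down: the designated-witness bookkeeping must be set up \emph{acyclically}, e.g.\ by stratifying the index set of each $\bar m_i$ into $\omega$ levels and placing the witness coordinate $w(\phi,\bar\jmath)$ for a tuple $\bar\jmath$ of level at most $k$ at level $k+1$. With a careless assignment, two witness conditions can constrain each other circularly (each demanding that its witness coordinate serve as a parameter of the other), and such a pair can be jointly unsatisfiable in any enumeration of a given model; then $\Pi$ would not be realized by enumerations of elementary substructures at all, and your finite-satisfiability step would break. With the stratified scheme, your recursive choice of enumerations of the $N^{L_0}_i$ (filling free coordinates first, then witnesses level by level, using $L_0$-elementarity for the $L_0$-formulas and arbitrary values elsewhere) goes through exactly as you describe.
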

	\begin{proof}
		The proof is by contraposition. Suppose $T$ is not $n$-G-compact. This means that we have two tuples $a,b\in \fC$ such that $\Lascd(a,b)\leq n+1$ but $\Lascd(a,b)>n$, i.e.\ $\neg \Lascd(a,b)\leq n$. Since $\Lascd(a,b)\leq n$ is an $\emptyset$-type-definable condition, by compactness, there is a formula $\varphi(x,y)$ witnessing its failure, i.e.\ such that $\Lascd(x,y)\leq n\vdash \varphi(x,y)$ and $\not\models \varphi(a,b)$. Now, $T\cup \{\neg\varphi(a,b),\Lascd(a,b)\leq n+1\}$ is consistent. It follows that if we take $L_0$ to be the finite set of symbols used in $\varphi$, then so is $T\restr_{L_0}\cup \{\neg\varphi(a,b),\Lascd(a,b)\leq n+1\}$ (where $\Lascd$ is in the sense of $T\restr_{L_0}$), so $T\restr_{L_0}$ is not $n$-G-compact.
	\end{proof}
	
	\begin{cor}
		\label{cor:finit_params}
		To show hereditary G-compactness of a theory, it is enough to show that for some $n$, all theories interpreted with finitely many parameters are $n$-G-compact. Furthermore, it is enough to choose the parameters from a single model realising all finitary types over $\emptyset$ (e.g.\ an $\aleph_0$-saturated model). Likewise, to check weakly hereditary G-compactness, it is enough to find an uniform bound on $n$-G-compactness of reducts of expansions of $T$ by finitely many constants from such a model.
	\end{cor}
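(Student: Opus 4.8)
The plan is to combine Proposition~\ref{prop:finite_sublanguage} with two elementary observations: that an interpretation into a \emph{finite} language requires only finitely many parameters, and that the theory of an interpreted structure depends only on the type of the parameters used.

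First I would treat the main (hereditary) statement. Fix the candidate bound $n$ supplied by the hypothesis. Given an arbitrary $M \models T$ and an arbitrary structure $N$ interpreted in $M$ with parameters, in some language $L_N$, the goal is to show $\Th(N)$ is $n$-G-compact. By Proposition~\ref{prop:finite_sublanguage} applied to $\Th(N)$, it is enough to show that every finite reduct $N\restr_{L_0}$ (for $L_0 \subseteq L_N$ finite) is $n$-G-compact, with the \emph{same} $n$ for all $L_0$. The key point is that $N\restr_{L_0}$ is itself interpreted in $M$ using only the defining formulas for the domain, for the equivalence relation, and for the finitely many symbols of $L_0$; being finitely many formulas, they involve only finitely many parameters. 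Thus $N\restr_{L_0}$ is interpreted in $M$ with finitely many parameters, and the hypothesis gives its $n$-G-compactness directly. Proposition~\ref{prop:finite_sublanguage} then yields $n$-G-compactness of $\Th(N)$, and as $M$ and $N$ were arbitrary, hereditary G-compactness of $T$.

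For the ``furthermore'' clause I would use that, for a fixed finite-parameter interpretation scheme $I$, the theory $\Th(I(M,\bar a))$ depends only on $\tp_M(\bar a/\emptyset)$: if $\tp_M(\bar a) = \tp_{M'}(\bar a')$ then $(M,\bar a) \equiv (M',\bar a')$, and interpretations preserve elementary equivalence, so $I(M,\bar a) \equiv I(M',\bar a')$. Letting $M^*$ realise every finitary type over $\emptyset$ (e.g.\ any $\aleph_0$-saturated model of $T$), every $I(M,\bar a)$ arising from some $M \models T$ and finite $\bar a$ has the same theory as $I(M^*,\bar a')$ for a suitable finite $\bar a' \in M^*$ realising $\tp_M(\bar a)$. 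Hence it suffices to verify the hypothesis for parameters drawn from the single model $M^*$.

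The weakly hereditary case is entirely parallel: given a reduct $N$ of an expansion $(M,\bar a)$ to a sublanguage $L'$, Proposition~\ref{prop:finite_sublanguage} reduces $n$-G-compactness of $\Th(N)$ to that of the finite reducts $N\restr_{L_0}$, each of which retains only finitely many symbols and hence only finitely many of the new constants; so each $N\restr_{L_0}$ is a reduct of an expansion of $M$ by finitely many constants, which by the same type-dependence argument may be taken from $M^*$. I expect the only delicate point to be the \emph{uniformity} of $n$: Proposition~\ref{prop:finite_sublanguage} requires a single value of $n$ working simultaneously across all finite sublanguages, which is precisely why the hypothesis must furnish one bound $n$ valid for all finite-parameter interpretations at once, rather than mere (unquantified) G-compactness of each.
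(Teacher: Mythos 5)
Your proposal is correct and takes essentially the same route as the paper: the paper's proof is just the one-line remark that the corollary is immediate from Proposition~\ref{prop:finite_sublanguage} and Fact~\ref{fct:gcom_equivalent}, and your write-up supplies exactly the details behind that word ``immediate'' --- that a finite-sublanguage reduct of an interpreted structure is itself interpreted with only finitely many parameters, and that the theory of an interpreted structure depends only on the type of those parameters. Your closing point about the uniformity of $n$ is also the right one to flag, since it is precisely why the hypothesis demands a single bound rather than mere G-compactness of each finite-parameter interpretation.
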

	\begin{proof}
		This is immediate by Proposition~\ref{prop:finite_sublanguage} and Fact~\ref{fct:gcom_equivalent}.
	\end{proof}

	\begin{rem}
		\label{rem:heredit_ngc_equiv}
		One can also show that a theory $T$ is hereditarily G-compact if and only if if for some $n$, it is hereditarily $n$-G-compact, so the sufficient condition for hereditary G-compactness given in Corollary~\ref{cor:finit_params} is also necessary. (But there seems to be no obvious reason for this to be true for weakly hereditary G-compactness.)\xqed{\lozenge}
	\end{rem}

	\subsection*{Stable and simple theories are G-compact}
	Simple theories form the widest class of theories known to be hereditarily G-compact.
	\begin{fct}
		\label{fct:stable_simple_gc}
		Every simple theory is $2$-G-compact, and hence hereditarily $2$-G-compact. Every stable theory is $1$-G-compact, and hence hereditarily $1$-G-compact.
	\end{fct}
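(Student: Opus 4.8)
The plan is to split the statement into its two halves and, in each, first reduce the hereditary clause to the plain $n$-G-compactness clause. Since simplicity and stability are both preserved under interpretation (the input already invoked in Example~\ref{ex:simple_is_hgc}), every theory interpreted in a simple (respectively stable) theory is again simple (respectively stable); so once we know that \emph{every} simple theory is $2$-G-compact and \emph{every} stable theory is $1$-G-compact, the phrase ``hence hereditarily'' follows immediately from Definition~\ref{dfn:hered_g_comp}. It therefore remains to bound the Lascar diameter of a single $\equivLasc$-class: by $2$ in the simple case and by $1$ in the stable case. Throughout I would use the forking calculus together with the standard descriptions of the Lascar strong type --- in a stable theory $a\equivLasc b$ is equivalent to $a$ and $b$ having the same strong type, $\tp(a/\acl^\eq(\emptyset))=\tp(b/\acl^\eq(\emptyset))$, and in a simple theory $a\equivLasc b$ entails that $a$ and $b$ have the same type over the bounded closure $\mathrm{bdd}(\emptyset)$.

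For the stable case the crucial tool is stationarity. Given $a\equivLasc b$, they share a strong type. Using extension and symmetry of nonforking I would pick a model $M$ (necessarily containing $\acl^\eq(\emptyset)$) with $a\forkindep[\acl^\eq(\emptyset)]M$ and $b\forkindep[\acl^\eq(\emptyset)]M$. Since the common strong type is stationary, it has a \emph{unique} nonforking extension to $M$; as both $\tp(a/M)$ and $\tp(b/M)$ are such extensions, they coincide. Hence $a\equiv_M b$ for a model $M$, which by the description of the Lascar graph in Remark~\ref{rem:Lascar_graph} means $\Lascd(a,b)\leq 1$, giving $1$-G-compactness.

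For the simple case stationarity fails, so a single model generally cannot witness distance $1$; instead I would insert one intermediate point via the Independence Theorem (amalgamation of Lascar strong types over $\mathrm{bdd}(\emptyset)$). Given $a\equivLasc b$, I would arrange by extension two models $M_1,M_2$ with $M_1\equivLasc M_2$, $M_1\forkindep[\emptyset]M_2$, $a\forkindep[\emptyset]M_1$ and $b\forkindep[\emptyset]M_2$ (all independences over $\mathrm{bdd}(\emptyset)$). The Independence Theorem then produces a tuple $c$ with $\tp(c/M_1)=\tp(a/M_1)$ and $\tp(c/M_2)=\tp(b/M_2)$. Since $M_1$ and $M_2$ are models, each equality is an edge in the Lascar graph, so $\Lascd(a,c)\leq 1$ and $\Lascd(c,b)\leq 1$, whence $\Lascd(a,b)\leq 2$ and $T$ is $2$-G-compact.

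The routine parts are the forking bookkeeping (extension, symmetry, monotonicity) used to set up these independence configurations. The genuine, non-elementary ingredient --- and the step I expect to be the main obstacle to making the argument self-contained --- is the Independence Theorem for simple theories; in a write-up I would cite it from the Kim--Pillay theory of simple theories rather than reprove it, exactly as the stable case leans on the stationarity of strong types.
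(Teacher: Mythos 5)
Your proof is correct and, in substance, it is the paper's proof. The reduction of the ``hence hereditarily'' clause to plain $n$-G-compactness via preservation of stability/simplicity under interpretation, and the stable case via stationarity of types over $\acl^{\eq}(\emptyset)$ (choose $M$ independent from $ab$, so that $\tp(a/M)$ and $\tp(b/M)$ are both nonforking extensions of the common strong type, hence equal, giving $\Lascd(a,b)\leq 1$), are exactly what the paper does. The only divergence is in the simple case: the paper's official proof outsources it to its later, more general facts --- Fact~\ref{fct:gcomp_nsop1} (NSOP$_1$ with existence) and Fact~\ref{fct:gc_in_ntp2} (NTP$_2$ with existence over $\emptyset$), either of which applies since simple theories are NSOP$_1$, NTP$_2$ and have existence --- whereas you argue directly from the classical Kim--Pillay Independence Theorem. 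The paper explicitly flags your route in parentheses (``Classically, this follows from the independence theorem for simple theories''), and your configuration (independent models $M_1,M_2$ with $a\forkindep M_1$ and $b\forkindep M_2$, amalgamated to produce $c$ with $a\equiv_{M_1}c\equiv_{M_2}b$) is literally the same configuration used in the paper's proof of Fact~\ref{fct:gcomp_nsop1}, with forking independence in place of Kim-independence. What the paper's routing buys is economy, since the general facts are proved later anyway and simplicity becomes a corollary; what yours buys is a self-contained argument modulo classical simplicity theory. One small remark: the hypothesis $M_1\equivLasc M_2$ in your setup is superfluous --- the Independence Theorem requires only $a\equivLasc b$ together with the three independence conditions, and imposes no constraint relating the two base models, so you can simply drop it from the bookkeeping.
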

	\begin{proof}
		Simple theories have existence (see Definition~\ref{dfn:existence}) and are both NTP$_2$ and NSOP$_1$, so G-compactness follows from either one of Fact~\ref{fct:gcomp_nsop1} and Fact~\ref{fct:gc_in_ntp2}. (Classically, this follows from the independence theorem for simple theories.) Heredity follows from the fact that simplicity is hereditary.
		
		For stable theories, it is immediate by the fact that types over $\acl^{\eq}(\emptyset)$ are stationary: if $a\equivLasc b$, then trivially $a\equiv_{\acl^{\eq}(\emptyset)} b$, so for any $M\forkindep ab$ we have $a\equiv_M b$.
	\end{proof}
	
	\subsection*{Fraïssé limits are (often) G-compact}
	Fraïssé limits are frequently used to construct examples in model theory, and often, they are limits of Fraïssé classes of relational structures with free amalgamation (also over $\emptyset$). The property of having canonical JEP is slightly more general, and we will see that it is sufficient for G-compactness.
	\begin{dfn}
%
		Let $\cC$ be a Fraïssé class in an arbitrary language. We say that $\cC$ has \emph{canonical JEP} (or \emph{canonical amalgamation over $\emptyset$}) if we have a functor from $\cC^2$, taking $(A,B)$ to a cospan of the form $A\rightarrow A\otimes B\leftarrow B$. In other words, given a pair $A,B\in \cC$, we have an amalgam $A\otimes B$ which respects embeddings, in the sense that given a pair $i_{AB},i_{CD}$ of embeddings, we have a canonical embedding $i_{AB}\otimes i_{CD}$ such that the following diagram commutes.
		\begin{center}
			\begin{tikzcd}
			B\ar[r] & B\otimes D & D\ar[l]& \\
			A\ar[u,"i_{AB}"]\ar[r] & A\ar[u,"i_{AB}\otimes i_{CD}"]\otimes C & C\ar[u,"i_{CD}"]\ar[l]&\xqed{\lozenge}
			\end{tikzcd}
		\end{center}
	\end{dfn}
	\begin{rem}
		Note that if we have free JEP, we can take $\otimes$ to be simply the disjoint union for structures as well as embeddings.)\xqed{\lozenge}
	\end{rem}
	
	\begin{prop}
		\label{prop:fraisse_gcomp}
		Let $\cC$ be a Fraïssé class with free amalgamation, or more generally, with canonical amalgamation. Then the theory of the Fraïssé limit of $\cC$ is $1$-G-compact.
	\end{prop}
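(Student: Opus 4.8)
The plan is to prove the slightly stronger statement that any two finite tuples $a,b$ in the monster model $\fC\models T$ with $\tp(a/\emptyset)=\tp(b/\emptyset)$ already satisfy $\Lascd(a,b)\leq 1$. Since $\equivLasc$ refines equality of types over $\emptyset$, this immediately yields $1$-G-compactness (indeed it shows $\equivLasc$ coincides with $\equiv_\emptyset$). Write $M$ for the Fraïssé limit, so that $T=\Th(M)$; recall that $M$ is ultrahomogeneous, which (in a relational language, or after the usual passage to an interdefinable relational language) gives quantifier elimination for $T$, so that the type of a tuple over a set is determined by the isomorphism type of the substructure it generates over that set. Fixing $a,b$ with $\tp(a/\emptyset)=\tp(b/\emptyset)$, quantifier elimination provides an isomorphism $f\colon \langle a\rangle\to\langle b\rangle$ of the generated finite substructures with $f(a)=b$, which we regard as an isomorphism in $\cC$.

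The heart of the argument is to produce a small model $N\preceq \fC$ that is placed \emph{canonically} over $a$ and over $b$ simultaneously. Let $AB=\langle a,b\rangle\in\cC$. Using the canonical amalgam $AB\otimes M$ as a template together with the saturation of $\fC$, one finds $N\preceq\fC$ and an isomorphism $AB\cup N\cong AB\otimes N$ fixing $AB$ and $N$: concretely, the partial type over $a,b$ asserting that $N$ is attached to $AB$ in the pattern prescribed by $\otimes$ is finitely satisfiable, since each finite approximation lies in $\cC$ (because $\otimes$ takes values in $\cC$) and embeds over its trace on $AB$ by the extension property of $M$, and a sufficiently generic such $N$ is an elementary substructure. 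Applying functoriality of $\otimes$ to the inclusions $\langle a\rangle\hookrightarrow AB\hookleftarrow\langle b\rangle$ paired with $\id_N$ shows that inside $AB\otimes N$ the copies $\langle a\rangle\otimes N$ and $\langle b\rangle\otimes N$ appear precisely as the canonical amalgams of $\langle a\rangle$, respectively $\langle b\rangle$, with $N$; in other words $N$ is canonically placed over each of $a$ and $b$.

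Now apply functoriality of $\otimes$ to the pair $(f,\id_N)$. Since $f\colon\langle a\rangle\to\langle b\rangle$ and $\id_N$ are isomorphisms and $\otimes$ is a functor, $f\otimes\id_N\colon\langle a\rangle\otimes N\to\langle b\rangle\otimes N$ is an isomorphism, and by the commuting square in the definition of canonical amalgamation it fixes $N$ pointwise and sends $a$ to $b$. Under the identifications of the previous paragraph this is an isomorphism $\langle a\rangle\cup N\to\langle b\rangle\cup N$ of substructures of $\fC$ fixing $N$ with $a\mapsto b$, so by quantifier elimination $\tp(a/N)=\tp(b/N)$. As $N\preceq\fC$, this exhibits a single model witnessing $\Lascd(a,b)\leq 1$, completing the proof. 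In the free-amalgamation case everything specialises to the transparent picture in which $N$ is simply a generic model having no relations to $a$ or $b$, exactly as for the random graph.

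The main obstacle is the middle step: making precise and justifying the existence of a model $N$ placed canonically over $ab$, and checking that it is an elementary substructure. This is where the two properties of $\otimes$ (that it lands in $\cC$ and respects embeddings) and the saturation/genericity of $\fC$ are essential, and one must verify that restricting the amalgam over $AB$ along the inclusions $\langle a\rangle,\langle b\rangle\hookrightarrow AB$ genuinely recovers the canonical amalgams with $N$ — which is exactly what functoriality along those inclusions supplies. Implicitly this step also uses that canonical amalgamation forbids nontrivial algebraicity over $\emptyset$ (one has $\acl(\emptyset)=\dcl(\emptyset)$), so that no coordinate of $a$ is forced to lie in $N$; otherwise a canonically placed $N$ could not exist and the stronger statement would genuinely fail.
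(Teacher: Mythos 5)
Your proposal is correct and is essentially the paper's own proof: the paper isolates your middle step as a Claim about finitely generated substructures of the limit $M$ (the canonical copy $B'$ of $B$ in $\langle A,A'\rangle\otimes B$ satisfies $\tp(A/B')=\tp(A'/B')$, proved by exactly your diagram chase, i.e.\ functoriality of $\otimes$ along $\langle a\rangle\hookrightarrow\langle a,b\rangle\hookleftarrow\langle b\rangle$ together with the isomorphism $f\otimes\id$ fixing the copy of $B$), and then invokes compactness to turn these finitely generated copies into a model, whereas you assemble the canonically placed model $N$ inside the monster directly by finite satisfiability. The one point where you assume slightly more than the paper does is the assertion $\langle a,b\rangle\in\cC$ for tuples of the monster model (the paper only ever applies $\otimes$ to substructures of $M$ itself, handling monster tuples purely through the compactness step), but both write-ups leave comparable technical details at this stage to the reader.
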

	\begin{proof}
		The proposition follows easily from the following Claim --- namely, by compactness and Claim, given two tuples of the same type, we can build a model over which they are equivalent.
		\begin{clm*}
			Let $M$ be the limit of $\cC$, and let $A,B\subseteq M$ be finitely generated substructures. Then given any $A'\subseteq M$, $A'\cong A$, we can find some $B'\subseteq M$, $B'\cong B$, such that $\tp(A/B')=\tp(A'/B')$.
		\end{clm*}
		\begin{clmproof}
			Let $C\subseteq M$ the substructure generated by $A$ and $A'$, and consider $C\otimes B$ as a substructure of $M$ in such a way that the canonical embedding of $C$ is simply the inclusion. We claim that the canonical copy $B'$ of $B$ in $C\otimes B$ satisfies the conclusion. To see this, consider the following commutative diagram.
			\begin{center}
				\begin{tikzcd}
				C\ar[rr] & & C\otimes B && C\ar[ll]\\
				&& B\ar[u]\ar[dl]\ar[dr] &&\\
				A\ar[r]\ar[uu,"\subseteq"] & A\otimes B\ar[rr,"\cong\otimes \id_B"]\ar[ruu,"\subseteq\otimes \id_B"] &&A'\otimes B\ar[ll]\ar[luu,"\subseteq\otimes \id_B",swap] & A'\ar[l]\ar[uu,"\subseteq"]
				\end{tikzcd}
			\end{center}
			In this diagram, the unmarked arrows are the canonical embeddings. Commutativity follows from the definition of canonical JEP, and it easily implies the conclusion --- the type of $A$ over $B'$ is coded by the embedding of $A$ into $A\otimes B$, which is isomorphic to the embedding into $A'\otimes B$ induced by the isomorphism $A\cong A'$.
		\end{clmproof}
	\end{proof}
	
	\begin{ex}
		\label{ex:canonical_jep}
		Proposition~\ref{prop:fraisse_gcomp} easily implies that many theories are $1$-G-compact, including essentially all well-understood NSOP+SOP$_1$ theories. Among them are (the theories of) the following:
		\begin{itemize}
			\item
			various generic graphs, such as:
			\begin{itemize}
				\item
				the (Rado) random graph,
				\item 
				the (Henson) generic $K_n$-free graphs (cf.\ Example~\ref{ex:henson}),
				\item
				directed graphs omitting odd $({\leq} n)$-cycles,
			\end{itemize}
			\item 
			vector spaces with generic bilinear forms,
			\item 
			atomless Boolean algebras,
			\item
			free commutative monoids (e.g.\ $(\bN\setminus\{0\},\cdot)$),
			\item
			dense linear orderings without endpoints.\xqed{\lozenge}
		\end{itemize}
	\end{ex}
%
%

	\subsection*{G-compactness of NSOP\texorpdfstring{$_1$}{1} theories}
	Recall the following definition.
	\begin{dfn}
		\label{dfn:existence}
		Let $T$ be a first order theory with monster model $\fC$. Given a small $A\subseteq \fC$, we say that \emph{$T$ has existence (for forking) over $A$} or that \emph{$A$ is a base for forking} if it satisfies one of the following (equivalent) conditions:
		\begin{itemize}
			\item
			every type over $A$ can be extended to a complete global type, nonforking over $A$,
			\item
			for any $a,b$, there is some $a'\equiv_A a$ such that $a'\forkindep_Ab$,
			\item
			no consistent formula with parameters in $A$ forks over $A$.
		\end{itemize}
		We say that \emph{$T$ has existence (for forking)} if it has existence over every small set.\xqed{\lozenge}
	\end{dfn}
	
	Using the results of \cite{DKR19} one can show that in a theory with NSOP$_1$ (which is a property strictly stronger than strict order property and strictly weaker than simplicity), the so-called Kim-independence relation $\forkindep^K$ (which refines the forking independence relation) has properties which allow us to prove $2$-G-compactness. The following fact summarises all these properties --- the reader can take $\forkindep^K$ as an abstract ternary relation satisfying them. For more in-depth explanations, see \cite{KR19} and \cite{DKR19}.
	\begin{fct}
		\label{fct:nsop1}
		Suppose $T$ is an NSOP$_1$ theory with existence. Then there is a ternary independence relation $\forkindep^K$ refining the forking independence relation, satisfying the following axioms:
		\begin{enumerate}
			\item 
			symmetry: $a\forkindep^K_Cb$ if and only if $b\forkindep^K_Ca$,
			\item 
			existence: for any $a,b,C$, there is some $a'\equiv_C a$ such that $a'\forkindep^K_Cb$,
			\item 
			transitivity: if $C\subseteq D$ and $a\forkindep^K_C b$ and $a\forkindep^K_DC$, then $a\forkindep^K_D b$,
			\item
			invariance under automorphisms: if $abC\equiv a'b'C'$, then $a\forkindep_C^Kb$ if and only if $a'\forkindep_{C'}^K b'$.
		\end{enumerate}
		In addition, $\forkindep^K$ satisfies the following ``3-amalgamation'' theorem.
		
		If $b\forkindep^K c$, $a\forkindep^K b$, $a'\forkindep^K c$ and $a\equivLasc a'$, then there is some $a''$ such that $a\equiv_b a''\equiv_c a'$.
	\end{fct}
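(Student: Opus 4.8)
The plan is to present $\forkindep^K$ as Kim-independence and to read each listed property off the theory developed in \cite{KR19} and \cite{DKR19}, rather than to construct the relation from scratch. First I would recall the definition: over a model — or, granting the existence hypothesis, over an arbitrary small set $C$ — one sets $a\forkindep^K_C b$ to mean that $\tp(a/Cb)$ does not Kim-fork over $C$, where a formula Kim-forks over $C$ if it divides along some Morley sequence generated by a global $C$-invariant (e.g.\ coheir) type extending the relevant type. The foundational input here is Kim's lemma for NSOP$_1$ theories, established in \cite{KR19}: it guarantees that Kim-forking does not depend on the choice of invariant Morley sequence witnessing it. This is precisely the point at which the NSOP$_1$ hypothesis is consumed, and it is what promotes $\forkindep^K$ from an artifact of a particular witness to a robust independence relation.

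With the definition fixed, most of the axioms are direct citations. Invariance (axiom~4) is immediate, since Kim-forking is visibly preserved by automorphisms of $\fC$. The extension/existence property (axiom~2) and transitivity (axiom~3), as well as the fact that $\forkindep^K$ refines forking independence, belong to the basic package for Kim-independence over models in \cite{KR19}. To obtain these over an \emph{arbitrary} small $C$ — which is what the statement asserts — I would invoke the existence hypothesis on $T$ together with the extension of the whole theory to arbitrary sets carried out in \cite{DKR19}; the existence assumption is exactly the ingredient that lets one pass from the model-based formulation to the set-based one used here.

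The two substantive inputs are symmetry (axiom~1) and the $3$-amalgamation clause, and these are where the genuine difficulty resides. Symmetry of $\forkindep^K$ is one of the main theorems of \cite{KR19}; its proof is technical, routed through tree-indexed Morley sequences and the independence theorem, so I would cite it rather than reprove it. The $3$-amalgamation statement is the independence theorem for NSOP$_1$ theories, also from \cite{KR19}: from $b\forkindep^K c$, $a\forkindep^K b$, $a'\forkindep^K c$ and $a\equivLasc a'$ one amalgamates to produce a single $a''$ realising $\tp(a/b)$ and $\tp(a'/c)$ simultaneously. The one point needing care is that the hypothesis is phrased with the Lascar-strong-type condition $a\equivLasc a'$ rather than mere equality of types over the base; this Lascar-strong version is the form actually proved in the cited work, and it is the form needed downstream to extract $2$-G-compactness. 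Consequently the main obstacle is not in the assembly — which is essentially bookkeeping — but in the depth of the cited results themselves, namely Kim's lemma and the symmetry and independence theorems for NSOP$_1$ theories; since the paper uses $\forkindep^K$ only as a black box satisfying these axioms, the proof reduces to pinpointing the relevant statements in \cite{KR19} and \cite{DKR19} and verifying that existence is what underwrites the arbitrary-set formulation.
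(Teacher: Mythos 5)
Your overall strategy---reading $\forkindep^K$ as Kim-independence and assembling the listed axioms as citations to \cite{KR19} and \cite{DKR19}---is exactly the paper's approach, and for symmetry, invariance, existence and 3-amalgamation the assembly can indeed be made to work. But there is one genuine gap: transitivity (axiom 3). You assert that transitivity ``belongs to the basic package for Kim-independence over models in \cite{KR19}'' and then transfers to arbitrary sets via \cite{DKR19}. Neither half of this is accurate: transitivity of Kim-independence is not among the results of \cite{KR19} (over models it required a separate, later argument by Kaplan and Ramsey), and it is \emph{not} proved over arbitrary sets in \cite{DKR19} at all. The paper is explicit about precisely this point: it states that transitivity ``is not included in \cite{DKR19}'' and that the author relies on personal communication with one of that paper's authors that it holds in the set-based setting. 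Since your proof is pure citation-assembly, pointing at sources that do not contain the needed statement is a real failure of the argument, not bookkeeping; as written, axiom 3 is unsupported.

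Two secondary inaccuracies are worth fixing as well. First, your definition of $\forkindep^K_C$ over an arbitrary small set $C$ via dividing along Morley sequences in global $C$-\emph{invariant} (e.g.\ coheir) types is the over-a-model definition; over arbitrary sets, \cite{DKR19} defines Kim-dividing via \emph{nonforking} Morley sequences, and the existence hypothesis is exactly what guarantees a supply of such sequences---this is where existence enters, not merely as a device ``to pass from models to sets.'' Second, the specific statements the Fact needs are the set-based ones: symmetry is \cite[Corollary 4.9]{DKR19} and the 3-amalgamation clause (over the empty base, with the hypothesis $a\equivLasc a'$) is part of \cite[Theorem 5.8]{DKR19}; attributing both to \cite{KR19}, whose versions are formulated over a model $M$ with hypothesis $a\equiv_M a'$, is a misattribution even if your blanket appeal to \cite{DKR19} gestures in the right direction. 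Finally, note that the paper obtains axiom 2 more cheaply than you do: Kim-forking implies forking, so existence for forking immediately yields existence for $\forkindep^K$, with no need to invoke any extension machinery.
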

	\begin{proof}
		Symmetry is \cite[Corollary 4.9]{DKR19}. Existence follows from the fact that ${\forkindep^K}\subseteq {\forkindep}$ (and existence for $\forkindep$). Transitivity is not included in \cite{DKR19}, but we have been informed (in personal communication with the third author) that it holds. Invariance under automorphisms is immediate by definition of $\forkindep^K$.
		
		The last paragraph is a part of the 3-amalgamation theorem for Lascar strong types (\cite[Theorem 5.8]{DKR19}).
	\end{proof}
	
	\begin{fct}
		\label{fct:gcomp_nsop1}
		Suppose $T$ is NSOP$_1$ with existence. Then $T$ is $2$-G-compact.
	\end{fct}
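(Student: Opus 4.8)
The plan is to show that for any two tuples $a, b$ with $a \equivLasc b$, we have $\Lascd(a,b) \leq 2$; this is exactly $2$-G-compactness. The natural strategy is to exploit the $3$-amalgamation theorem from Fact~\ref{fct:nsop1} together with the existence axiom for $\forkindep^K$ to produce, from a single Lascar equivalence, a chain of length at most two realizing it. The key conceptual point is that the Kim-independence axioms let us manufacture a common ``base'' over which $a$ and $b$ have the same type in just two steps, rather than the a priori unbounded number of steps in the definition of $\Lascd$.

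First I would fix $a \equivLasc a'$ (renaming $b$ as $a'$ for symmetry with the statement of $3$-amalgamation). By the existence axiom, I can find $b$ with $b \forkindep^K a$, and then --- applying existence again and invariance --- arrange a configuration $b \forkindep^K c$, $a \forkindep^K b$, $a' \forkindep^K c$, so that the hypotheses of the $3$-amalgamation clause are met. The conclusion hands me some $a''$ with $a \equiv_b a'' \equiv_c a'$. The element $a''$ is the crucial intermediate point: since $a \equiv_b a''$ and $b$ (after enlarging to a model, or by the invariant-relation reformulation of $\equivLasc$) witnesses a Lascar edge, we get $\Lascd(a, a'') \leq 1$; symmetrically $\Lascd(a'', a') \leq 1$, and hence $\Lascd(a, a') \leq 2$ by the triangle inequality. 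So the whole argument reduces to producing the right independence configuration and then reading off the two edges.

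The main obstacle I anticipate is getting the Lascar edges honestly from the relations $a \equiv_b a''$ and $a'' \equiv_c a'$. A Lascar edge in the sense of Definition~\ref{dfn:g_comp} requires equality of types over an \emph{elementary submodel}, whereas $b$ and $c$ here are arbitrary small parameter sets, not models. The standard way around this is to observe that $a \equivLasc a''$ as soon as $a \equiv_b a''$ with $b \forkindep^K$ something suitable, using that $\equivLasc$ is an invariant equivalence relation coarser than ``same type over a model'': more precisely one extends $b$ (resp.\ $c$) to a model $M \supseteq b$ with $a \forkindep^K_b M$, so that $\tp(a/b)$ has a unique nonforking-type-respecting extension forcing $a \equiv_M a''$, giving a genuine single Lascar edge. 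Making this extension step compatible with the $3$-amalgamation output --- so that the intermediate $a''$ simultaneously serves both edges --- is the delicate part; everything else is a routine invocation of symmetry, transitivity, and existence from Fact~\ref{fct:nsop1}. Once each of $a \equivLasc a''$ and $a'' \equivLasc a'$ is certified by a single model, the bound $\Lascd(a,a') \leq 2$ is immediate.
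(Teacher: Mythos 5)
Your overall strategy---set up the independence configuration $b\forkindep^K c$, $a\forkindep^K b$, $a'\forkindep^K c$ via existence, symmetry and transitivity, then invoke the $3$-amalgamation clause of Fact~\ref{fct:nsop1} to obtain $a''$ with $a\equiv_b a''\equiv_c a'$---is exactly the paper's strategy. However, your handling of the obstacle you (correctly) identified, namely that $b$ and $c$ must be \emph{models} for these equivalences to count as Lascar edges, contains a genuine error. You propose to extend $b$ to a model $M\supseteq b$ with $a\forkindep^K_b M$ and to argue that $\tp(a/b)$ has a ``unique nonforking-type-respecting extension'' forcing $a\equiv_M a''$. That uniqueness assertion is a stationarity property; it is a stability phenomenon and is false in NSOP$_1$ theories (it already fails in the random graph): from $a\equiv_b a''$ together with $a\forkindep^K_b M$ and $a''\forkindep^K_b M$ there is no reason whatsoever to conclude $a\equiv_M a''$. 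Indeed, if same type over an arbitrary set $b$ could always be upgraded in this way to same type over a model containing $b$, then taking $b=\emptyset$ would show that $a\equiv a''$ implies $\Lascd(a,a'')\leq 1$ in every theory, which is absurd. So as written, the two ``edges'' you produce at the end are not edges, and the bound $\Lascd(a,a')\leq 2$ does not follow.

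The repair is much simpler than what you attempt, and it is what the paper does: the $3$-amalgamation statement in Fact~\ref{fct:nsop1} holds for arbitrary tuples $b$, $c$, so one should instantiate it with $b$, $c$ enumerating elementary submodels from the very start. Concretely, the paper takes models $M_1'\equiv M_2'$ with $M_2'\forkindep^K M_1'$, uses existence over the base $a$ to find $b\equiv_a a'$ with $b\forkindep^K_a M_1'M_2'$, and applies an automorphism over $a$ sending $b$ to $a'$ to obtain models $M_1,M_2$ with $a'\forkindep^K_a M_1M_2$, $a\forkindep^K M_1M_2$ and $M_1\forkindep^K M_2$; transitivity and symmetry then give $aa'\forkindep^K M_1M_2$, hence in particular $a\forkindep^K M_1$ and $a'\forkindep^K M_2$. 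Amalgamation now yields $a''$ with $a\equiv_{M_1}a''\equiv_{M_2}a'$, and since $M_1,M_2$ are elementary submodels this is literally $\Lascd(a,a')\leq 2$---no upgrading step is needed. Note also that obtaining all three independences for a \emph{single} pair of parameters is exactly where the transitivity axiom earns its keep; this is the part your sketch waves through as ``routine,'' whereas the model issue it stumbles on is the part that costs nothing once the parameters are chosen to be models in the first place.
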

	\begin{proof}
		Fix any $a\equivLasc a'$. Take any $M_1'\preceq \fC$. By existence, we can find some $M_2'\equiv M_1'$ such that $M_2'\forkindep^K M_1'$. Again by existence, there is some $b\equiv_a a'$ such that $b\forkindep_a^K M_1'M_2'$, and thus (by applying an automorphism taking $ab$ to $aa'$) we can find some models $M_1,M_2$ such that $a'\forkindep^K_a M_1M_2$, $a\forkindep^K M_1M_2$ and $M_1\forkindep^K M_2$.
		
		By transitivity and symmetry, it follows that $aa'\forkindep^K M_1M_2$, so in particular $a\forkindep^K M_1$ and $a'\forkindep^K M_2$. By the last part of Fact~\ref{fct:nsop1}, we conclude that there is some $a''$ such that $a\equiv_{M_1} a''\equiv_{M_2} a'$, and hence $\Lascd(a,a')\leq 2$.
	\end{proof}
%
%
	\begin{rem}
		\label{rem:existence_for_nsop1}
		Note that whether or not every NSOP$_1$ theory has existence for forking is an open question. If the answer is positive, this would imply that NSOP$_1$ theories are hereditarily $2$-G-compact. Therefore, it seems unlikely that there would be a non-hereditarily G-compact NSOP$_1$ theory.\xqed{\lozenge}
	\end{rem}

	\subsection*{G-compactness of NTP\texorpdfstring{$_2$}{2} theories}
	
	\begin{fct}
		\label{fct:ntp2_bc}
		Suppose that $T$ is an NTP$_2$ theory with existence over $\emptyset$. Let $a,b,b',c$ be small tuples such that $c\forkindep ab$, $a\forkindep bb'$ and $b\equivLasc b'$. Then there is some $c'$ such that $c'a\equiv ca$ and $c'b'\equiv cb$.
	\end{fct}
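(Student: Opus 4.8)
The plan is to reduce the statement to the independence theorem for NTP$_2$ theories over an extension base (as in the work of Ben Yaacov--Chernikov), whose engine is the Chernikov--Kaplan broom lemma. Since $T$ has existence over $\emptyset$, the empty set is an extension base, so the independence theorem is available over $\emptyset$; the key point is that the correct notion of ``sameness'' for amalgamation over $\emptyset$ is equality of Lascar strong types, i.e.\ $\equivLasc$, rather than equality of types. The hypothesis $b\equivLasc b'$ is on the parameter side, so the first task is to convert the ``base change'' from $b$ to $b'$ into an ordinary amalgamation of two realizations of the same Lascar strong type.

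First I would use $b\equivLasc b'$ to produce a suitable transport of $c$. By definition of $\Lascd$ there is a finite product $\tau=\sigma_n\cdots\sigma_1$ of automorphisms, with each $\sigma_i\in\Aut(\fC/M_i)$ for some $M_i\preceq\fC$, such that $\tau(b)=b'$. Set $c_2:=\tau(c)$. Then $c_2b'=\tau(cb)$, so $c_2b'\equiv cb$; moreover, since each $\sigma_i$ fixes a model pointwise, every step $\sigma_{i}\cdots\sigma_1(c)\mapsto\sigma_{i+1}\cdots\sigma_1(c)$ has Lascar distance at most one, whence $c\equivLasc c_2$. From the stated hypotheses I then read off, by monotonicity of nonforking, that $c\forkindep a$ and $c\forkindep b$ (the latter giving $c_2\forkindep b'$ upon applying the automorphism $\tau$, which preserves nonforking over $\emptyset$), and that $a\forkindep b'$.

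Finally I would apply the NTP$_2$ independence theorem over $\emptyset$ with the two ``sides'' $a$ and $b'$: the inputs $c\forkindep a$, $c_2\forkindep b'$, $a\forkindep b'$ and $c\equivLasc c_2$ are exactly what it requires, and it returns some $c'$ with $c'\equiv_a c$ and $c'\equiv_{b'}c_2$ (together with $c'\forkindep ab'$, which we do not need). The first conclusion yields $c'a\equiv ca$, and the second yields $c'b'\equiv c_2b'\equiv cb$, as required. The genuinely hard input is the independence theorem itself, which is where NTP$_2$ and the broom lemma enter; in the reduction the only things needing care are that $\emptyset$ is a legitimate base (guaranteed by existence over $\emptyset$) and that amalgamation is governed by $\equivLasc$ and not by type-equality --- this is precisely why replacing $c$ by $c_2=\tau(c)$, so as to obtain $c\equivLasc c_2$, is the crucial move. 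Should the available form of the independence theorem carry a strictness requirement on one side, the fallback is to run the broom lemma directly: assuming $\tp(c/a)$ together with the $b'$-transport of $\tp(c/b)$ were inconsistent, compactness produces a dividing instance over $\emptyset$ that the broom lemma, combined with $c\forkindep a$ and the Lascar-equivalence of the parameters, rules out.
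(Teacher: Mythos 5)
The statement you are trying to prove is itself the independence theorem for NTP$_2$ theories: the paper's proof is simply the citation \cite[Theorem 3.3]{BC14} (with $A=\emptyset$), and Ben Yaacov and Chernikov state their theorem in exactly this asymmetric ``base change'' form, with the joint independences $c\forkindep ab$ and $a\forkindep bb'$ as hypotheses and with the second amalgamation problem obtained by transporting $\tp(c/b)$ along $b\equivLasc b'$. Your reduction throws this structure away: after producing $c_2=\tau(c)$ you retain only $c\forkindep a$, $c_2\forkindep b'$, $a\forkindep b'$ and $c\equivLasc c_2$, and you then appeal to a symmetric, simple-theory-style amalgamation theorem (``two Lascar-equivalent realizations amalgamate over independent sides''). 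No such theorem is available for NTP$_2$ theories --- the only independence theorem known in this generality is \cite[Theorem 3.3]{BC14}, i.e.\ the very statement in question --- so the argument is circular at best.

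In fact, the statement you invoke is false for NTP$_2$ theories with existence over $\emptyset$. Take $T$ to be the theory of dense linear orders without endpoints (NIP, hence NTP$_2$; it has existence over $\emptyset$, e.g.\ by Fact~\ref{fct:inpm_gcomp}), and pick singletons $c<a<b'<c_2$ in a monster model. Each of $\tp(c/a)$, $\tp(c_2/b')$, $\tp(a/b')$ extends to one of the two global $\emptyset$-invariant types at $\pm\infty$, so $c\forkindep a$, $c_2\forkindep b'$ and $a\forkindep b'$; moreover $c\equivLasc c_2$, because any strictly increasing sequence is indiscernible, so any two singletons are at Lascar distance $1$. Yet there is no $c'$ with $c'\equiv_a c$ and $c'\equiv_{b'}c_2$: by quantifier elimination this would force $c'<a$ and $c'>b'$, contradicting $a<b'$. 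This shows that the hypotheses you discarded (the \emph{joint} independences, and the fact that $c_2$ is the image of $c$ under an automorphism carrying $b$ to $b'$) are not bookkeeping: they are exactly what makes the theorem true, and exploiting them is the whole content of \cite{BC14}. For the same reason, your fallback of ``running the broom lemma directly'' is not a repair but a proposal to reprove that paper; as sketched (one application of compactness plus the broom lemma) it does not engage with the actual work there (arrays, strictly invariant sequences, the chain condition).
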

	\begin{proof}
		This is \cite[Theorem 3.3]{BC14} (with $A=\emptyset$).
	\end{proof}
	
	\begin{fct}
		\label{fct:gc_in_ntp2}
		If $T$ is an NTP$_2$ theory with existence over $\emptyset$, then $T$ is $2$-G-compact.
	\end{fct}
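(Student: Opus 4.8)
The plan is to imitate the proof of Fact~\ref{fct:gcomp_nsop1} for NSOP$_1$ theories, replacing the use of the $3$-amalgamation theorem by the NTP$_2$ base-change statement Fact~\ref{fct:ntp2_bc}. Fix small tuples $b\equivLasc b'$; the goal is to show $\Lascd(b,b')\leq 2$, which is exactly $2$-G-compactness. The target shape of the argument is to produce two models $M,N'\preceq \fC$ together with a midpoint $b''$ satisfying $b\equiv_M b''$ and $b''\equiv_{N'} b'$, so that the chain $b,b'',b'$ witnesses $\Lascd(b,b')\leq 2$.

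First I would fix the independence configuration matching the hypotheses of Fact~\ref{fct:ntp2_bc}. Using existence over $\emptyset$, I choose a model $M\preceq \fC$ with $M\forkindep bb'$, and then a model $N\preceq \fC$ with $N\forkindep Mb$; each of these is obtained by realising the type of a model independently from the relevant tuple, which is precisely what existence over $\emptyset$ provides. Setting $(c,a)=(N,M)$, the three hypotheses $c\forkindep ab$, $a\forkindep bb'$ and $b\equivLasc b'$ all hold, so Fact~\ref{fct:ntp2_bc} yields some $N'$ with $N'M\equiv NM$ and $N'b'\equiv Nb$.

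It then remains to read off the Lascar chain from this conclusion by a short diagram chase of automorphisms. Since $N'M\equiv NM$, there is $\rho\in \Aut(\fC/M)$ with $\rho(N')=N$; and as $N'\equiv N$ with $N$ enumerating a model, $N'$ enumerates a model as well, so $N'\preceq \fC$. Since $N'b'\equiv Nb$, there is $\sigma\in \Aut(\fC)$ with $\sigma(N')=N$ and $\sigma(b')=b$. I would then set $b'':=\rho^{-1}(b)$. The automorphism $\rho$ fixes $M$ pointwise and sends $b''$ to $b$, so $b\equiv_M b''$; meanwhile $\rho^{-1}\sigma$ fixes $N'$ pointwise (it sends $N'\mapsto \rho^{-1}(N)=N'$) and sends $b'$ to $\rho^{-1}(b)=b''$, so $b''\equiv_{N'} b'$. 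As both $M$ and $N'$ are models, this gives $\Lascd(b,b')\leq 2$.

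The main obstacle here is organisational rather than deep: one must set up the independence configuration so that it matches the hypotheses of Fact~\ref{fct:ntp2_bc} exactly, and in particular verify that existence over $\emptyset$ alone (not over larger parameter sets) already suffices to produce the two independent models $M$ and $N$. The remaining work is the automorphism bookkeeping required to extract the two witnessing models and the intermediate tuple $b''$ from the two ``$\equiv$'' conclusions. All the genuine mathematical content sits inside Fact~\ref{fct:ntp2_bc}; once that is granted, the derivation of $2$-G-compactness is a routine argument.
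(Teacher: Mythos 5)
Your proof is correct and follows essentially the same route as the paper: choose models $M\forkindep bb'$ and $N\forkindep Mb$ by existence over $\emptyset$, apply Fact~\ref{fct:ntp2_bc} with $(a,c)=(M,N)$, and extract the midpoint $b''$ from the two equivalences to witness $\Lascd(b,b')\leq 2$. The only cosmetic difference is that the paper defines $b''$ implicitly by the condition $m_c'b''\equiv_{M_a} m_cb$, whereas you construct it explicitly as $\rho^{-1}(b)$ --- these are the same automorphism bookkeeping.
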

	\begin{proof}
		Take any $b\equivLasc b'$. By existence, we can find models $M_a,M_c$, enumerated by $m_a,m_c$ (respectively), such that $m_a\forkindep bb'$ and $m_c\forkindep m_ab$. By Fact~\ref{fct:ntp2_bc} (with $a=m_a$ and $c=m_c$), there is a tuple $m_c'$ enumerating a model $M_c'$ such that $m_c'\equiv_{M_a} m_c$ and $m_c'b'\equiv m_cb.$ Now, let $b''$ be such that $m_c'b''\equiv_{M_a} m_cb$. Then also $m_c'b''\equiv m_c b\equiv m_c'b'$, so $b\equiv_{M_a} b''\equiv_{M_c'} b'$, whence $\Lascd(b,b')\leq 2$.
	\end{proof}
	
	An interesting subclass of NTP$_2$ theories consists of so-called inp-minimal theories. The inp-minimal theories which have NIP are exactly the dp-minimal theories, including all weakly o-minimal theories.
	
	The following fact shows that in this context, the existence of an appropriate linear order yields G-compactness. This is in stark contrast to Theorem~\ref{thm:lin_not_hgc}, which shows that the presence of a linear order allows us to destroy G-compactness.
	\begin{fct}
		\label{fct:inpm_gcomp}
		An inp-minimal linearly ordered theory (in the sense that there is a definable order on the universe, which is linear without endpoints) has existence. Consequently, every inp-minimal theory is $2$-G-compact. (This includes in particular all order dp-minimal, and hence all o-minimal theories.)
	\end{fct}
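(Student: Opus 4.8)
The plan is to reduce the $2$-G-compactness to the existence property and then quote Fact~\ref{fct:gc_in_ntp2}. An inp-minimal theory has burden $1$, so it is in particular NTP$_2$; hence, once we know it has existence over $\emptyset$, Fact~\ref{fct:gc_in_ntp2} gives $2$-G-compactness for free. (Any inp-minimal theory that happens to be simple is in any case already covered by Fact~\ref{fct:stable_simple_gc}.) So the whole content is the first sentence of the statement: if the universe of an inp-minimal theory carries a definable linear order without endpoints, then $\emptyset$ is an extension base, i.e.\ no consistent formula over $\emptyset$ forks over $\emptyset$.

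First I would record the trivial but crucial observation that a formula $\varphi(x)$ \emph{without parameters} can never divide over $\emptyset$: dividing requires an $\emptyset$-indiscernible sequence in the parameter together with $k$-inconsistency of the corresponding instances, which is vacuous when there is no parameter. Consequently, existence over $\emptyset$ can only fail through \emph{forking}: there would be a consistent $\emptyset$-formula $\varphi(x)$ with $\varphi(x)\vdash\bigvee_{i<m}\psi_i(x,a_i)$, where each $\psi_i(x,a_i)$ divides over $\emptyset$. The goal is then to derive a contradiction from such a configuration, using the linear order together with burden $1$.

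The heart of the argument is to exploit the order to convert this single forking instance into a genuine depth-$2$ inp-pattern, contradicting inp-minimality. Concretely, I would fix for each $i$ an $\emptyset$-indiscernible sequence witnessing that $\psi_i(x,a_i)$ divides, and then use the absence of endpoints to arrange the parameters into an indiscernible array in which the ``vertical'' dividing of the $\psi_i$ is independent of a second, ``horizontal'' family of instances read off from $\varphi$ and the disjunction; the lack of a maximum or minimum supplies the room to push the relevant parameters to be mutually generic (cofinal or coinitial) so that the two families genuinely interfere. In the NIP sub-case --- the dp-minimal and o-minimal theories singled out in the statement --- one can short-circuit this: every type over $\emptyset$ extends to a global $\emptyset$-invariant type (read off at the appropriate cut, using the order), and an $\emptyset$-invariant global type does not fork over $\emptyset$, giving existence directly.

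The step I expect to be the main obstacle is precisely the passage from forking to dividing over $\emptyset$. Over a set that is not \emph{a priori} an extension base one cannot simply assume forking equals dividing --- that is exactly what we are proving --- so the disjunction $\bigvee_{i<m}\psi_i$ must be turned into an inp-pattern \emph{by hand}, via the Chernikov--Kaplan forking calculus for NTP$_2$ theories (the broom lemma) specialised to burden $1$, with the linear order providing the indiscernible data that the broom lemma consumes. Making the two families of indiscernible sequences genuinely independent --- so that burden $1$ is actually violated --- while keeping everything consistent with $\varphi$, is where the real work lies, and it is the ``no endpoints'' hypothesis that makes it possible.
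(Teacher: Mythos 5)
Your reduction of the second sentence to the first is exactly the paper's: inp-minimality gives NTP$_2$, so once existence over $\emptyset$ is known, Fact~\ref{fct:gc_in_ntp2} yields $2$-G-compactness. The gap is in the first sentence, which is the entire mathematical content of the statement. The paper does not prove it from scratch either: its proof consists of the observation that the proof of \cite[Corollary 2.6]{Sim11} is essentially a proof of existence for inp-minimal linearly ordered theories. You, by contrast, offer only a plan. Your opening observations are correct (a parameter-free consistent formula cannot divide over $\emptyset$, so a failure of existence is a forking-without-dividing configuration $\varphi(x)\vdash\bigvee_{i<m}\psi_i(x,a_i)$), and the proposed strategy --- contradict burden $1$ by assembling a depth-$2$ inp-pattern --- is the right general shape. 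But the assembly itself is never carried out: you do not say which two rows form the array, why one of them is $k$-inconsistent, how the rows are made mutually indiscernible while paths stay consistent with $\varphi$, or where precisely ``no endpoints'' enters; you yourself label this ``where the real work lies''. That deferred step is not routine. The dense circular order is a reduct of DLO, hence dp-minimal, and there $x=x$ forks but does not divide over $\emptyset$; so any such construction must use the definable \emph{linear} order in an essential and specific way, and ``pushing the parameters to be mutually generic'' does not identify what that way is.

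The NIP short-circuit you offer has the same problem in disguise: the cut determined by the locus of a type $p\in S(\emptyset)$ is indeed $\Aut(\fC)$-invariant, but a complete global type sitting at that cut need not be invariant (nor Lascar-invariant) --- the automorphism group fixes the cut setwise while possibly moving its completions --- and producing an invariant completion is precisely the existence statement one is trying to prove. So the proposal correctly frames the problem and matches the paper's reduction, but the core claim remains unproven; to close it you would need to either reproduce Simon's argument or cite \cite[Corollary 2.6]{Sim11}, as the paper does.
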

	\begin{proof}
		The proof of \cite[Corollary 2.6]{Sim11} is essentially a proof of the first sentence. The ``consequently'' part follows from the fact that inp-minimal theories have NTP$_2$ and Fact~\ref{fct:gc_in_ntp2}.
	\end{proof}
	
	\subsection*{Amenable theories}
	In \cite{HKP19}, the authors introduced the notion of \emph{first order amenability} of a first order theory, which is another generalisation of stability. They also show the following.
	\begin{fct}
		\label{fct:amenable_gcomp}
		If the theory $T$ is amenable, then it is G-compact.
	\end{fct}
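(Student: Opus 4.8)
The plan is to reduce G-compactness to a statement about the Lascar Galois group and then feed that statement the invariant measure supplied by amenability. Recall (as noted in the remark following Definition~\ref{dfn:g_comp}) that $T$ is G-compact if and only if the Lascar Galois group $\Gal(T)$, with the logic topology, is Hausdorff; equivalently, fixing a tuple $m$ enumerating a small model $M$ and setting $p=\tp(m)$, the Lascar strong type $[m]_{\equivLasc}$ is type-definable. Here I would use the standard identification: the orbit map $\sigma\mapsto\sigma(m)$ induces a continuous surjection $[p]\twoheadrightarrow\Gal(T)$, where $[p]\subseteq S_m(\fC)$ is the clopen set of types extending $p$, and whose fibres are exactly the $\equivLasc$-classes (two realisations map to the same point of $\Gal(T)$ iff some strong automorphism, e.g.\ an element of $\Autf(\fC)$ built from $\Aut(\fC/M)$, carries one to the other). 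Thus G-compactness amounts to these fibres being type-definable, equivalently to triviality of $\Gal_0(T)$, the closure of the identity in $\Gal(T)$, so that $\Gal(T)/\Gal_0(T)=\Gal_{\KP}(T)$ is already all of $\Gal(T)$.

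Next I would invoke amenability of $T$ in the form established in \cite{HKP19}: the action of $\Aut(\fC)$ on the Stone space $S_m(\fC)$ admits an $\Aut(\fC)$-invariant regular Borel (Keisler) measure $\mu$ concentrated on $[p]$. (A technical point worth flagging is that $m$ enumerates a model and is in general an infinite tuple, so one must use the version of amenability that produces invariant measures on the type spaces of such enumerations, rather than only on finite-variable spaces; this is exactly what amenability of the automorphism group provides.) Since $\mu$ is in particular $\Autf(\fC)$-invariant, it pushes forward along the orbit map to a left-invariant regular probability measure on the compact group $\Gal(T)$, i.e.\ to the Haar measure of $\Gal_{\KP}(T)$ pulled back through $\Gal(T)\to\Gal_{\KP}(T)$.

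The main step — and the point where the real work lies — is to convert the existence of this invariant measure into type-definability of $[m]_{\equivLasc}$, i.e.\ triviality of $\Gal_0(T)$. The difficulty is that $\Gal_0(T)$, being the closure of the identity, carries an anti-Hausdorff topology on which the measure alone detects nothing (its points are topologically indistinguishable), so the argument cannot be run inside $\Gal_0(T)$; it must exploit that $\mu$ lives on the genuinely Hausdorff space $S_m(\fC)$ and is invariant under the \emph{full} automorphism group. The idea is to use regularity of $\mu$ together with its invariance to separate $[m]_{\equivLasc}$ from the remainder of $[p]$ by a type-definable set — roughly, approximating the indicator of the Lascar class by definable (continuous on $S_m(\fC)$) functions that invariance forces to be constant on $\equivLasc$-classes, so that the class becomes closed. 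Once $[m]_{\equivLasc}$ is type-definable, $\Gal(T)$ is compact Hausdorff, and by Fact~\ref{fct:gcom_equivalent} every Lascar strong type has finite diameter, so $T$ is G-compact. I expect this final implication (invariant measure $\Rightarrow$ closedness of the Lascar class) to be the crux, requiring the quantitative approximation and regularity properties of amenable measures from \cite{HKP19} rather than any soft topological-group argument.
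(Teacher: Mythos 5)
The first thing to note is that the paper does not actually prove this statement: it is quoted as a Fact, and the paper's entire ``proof'' is the citation ``This is \cite[Theorem 4.30]{HKP19}''. So there is no internal argument to compare yours against; your proposal has to stand on its own as a proof, and it does not.

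Your first two paragraphs are correct but are only bookkeeping: the equivalence of G-compactness with Hausdorffness of $\Gal(T)$ (equivalently, type-definability of $[m]_{\equivLasc}$ for $m$ enumerating a small model), the existence under amenability of an $\Aut(\fC)$-invariant regular Borel measure concentrated on the types extending $\tp(m)$, and its pushforward to a left-invariant measure on $\Gal(T)$ are standard or definitional. The entire content of the theorem is what you call ``the main step'', and there you give no argument: you propose to ``separate $[m]_{\equivLasc}$ from the remainder'' by ``approximating the indicator of the Lascar class by definable functions that invariance forces to be constant on $\equivLasc$-classes'', and then defer the execution to ``the quantitative approximation and regularity properties of amenable measures from \cite{HKP19}''. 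That deferral is circular --- what you are deferring to is precisely the theorem being proved --- and the gestured mechanism is not workable as stated: invariance of a measure under $\Aut(\fC)$ does not force any continuous function on the type space to be constant on Lascar classes (if it did, G-compactness would be nearly trivial), and, as you yourself observe, the measure cannot distinguish the points of $\Gal_0(T)$, so no soft separation argument run on the Galois group can close the gap. The actual proof in \cite{HKP19} is of a different character: it is a Massicot--Wagner-style stabilizer argument, which uses the invariant measure to produce ``large'' (positive-measure) sets and converts largeness into explicit combinatorial bounds on the Lascar distance $\Lascd$ (via thickness-type containments of products of such sets), i.e.\ it bounds the diameter of the Lascar strong type directly rather than separating it topologically. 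In short, your proposal is a correct frame around the theorem, but the theorem itself is missing from it.
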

	\begin{proof}
		This is \cite[Theorem 4.30]{HKP19}.
	\end{proof}
	
	Amenability is known not to be preserved by adding parameters, so Fact~\ref{fct:amenable_gcomp} does give us any new examples of (even weakly) hereditarily G-compact theories.
	
	\begin{rem}
		One can show that the limit of a Fraïssé class with canonical amalgamation is amenable (see the introduction to \cite{HKP19}), so Fact~\ref{fct:amenable_gcomp} implies Proposition~\ref{prop:fraisse_gcomp}.\xqed{\lozenge}
	\end{rem}

	\section{Open problems}
	\label{sec:open}
	We know that simple theories are G-compact (and hence hereditarily G-compact). Furthermore, Fact~\ref{fct:gcomp_nsop1} (along with the fact that the question of whether existence holds in general for NSOP$_1$ theories remains open) suggests that the same may well be true for NSOP$_1$ theories. On the other hand, Theorems~\ref{thm:lin_not_hgc} and \ref{thm:wts_not_hgc} suggests that strict order property may be an essential obstruction to hereditary G-compactness --- even though SOP yields only a poset with an infinite chains, not a linear order, it might be possible to use Theorem~\ref{thm:wts_not_hgc} (or some variant) in the nonlinear case. Taken together, this suggests that there may be a strong relationship between NSOP and hereditary G-compactness, which prompts the following two questions.
	
	\begin{qu}
		\label{qu:SOP_nhgc}
		Is every SOP theory not hereditarily G-compact?\xqed{\lozenge}
	\end{qu}
	
	\begin{qu}
		\label{qu:nsop_gcomp}
		Is every NSOP$_1$ theory G-compact? More generally, is every NSOP theory G-compact (equivalently, hereditarily G-compact)?\xqed{\lozenge}
	\end{qu}
	
	So far, there seems to be no compelling evidence in either direction for NSOP+SOP$_1$ theories. In particular, it seems to be completely open whether or not all NSOP$_4$ theories are G-compact (which would also make them hereditarily G-compact).
	
	Note that Example~\ref{ex:canonical_jep} can be extended to include large part of the known non-simple NSOP theories, ruling them out as possible witnesses to a negative answer to Question~\ref{qu:nsop_gcomp}.
	
	In an altogether different direction, it seems that whenever G-compactness is proved in any broad context, we actually have $2$-G-compactness. This suggests that the following question may be reasonable.
	
	\begin{qu}
		Suppose $T$ is hereditarily G-compact. Is $T$ necessarily $2$-G-compact? If not, is there such $n$ such that all hereditarily G-compact theories are $n$-G-compact?
	\end{qu}
	
	Resolving such general questions can be very difficult; on the other hand, it may be interesting to show hereditary G-compactness for \emph{any} SOP$_1$ theory. Unfortunately, analysing all reducts of $T^\eq$ does not seem to be easy, either, even for relatively well-understood $T$. Thus, it may be interesting to show at least \emph{weakly} hereditary G-compactness. A particularly promising class of examples to check are the $\aleph_0$-categorical theories with quantifier elimination in a finite relational language --- a famous conjecture of Simon Thomas says that they have finitely many reducts (without parameters), so it is plausible that we could simply classify (at least broadly) all the reducts (even with parameters) of a given theory and show that they are all G-compact.
	
	\begin{ex}
		\label{ex:henson}
		Fix some integer $n\geq 3$. Denote by $K_n$ the complete graph on $n$ vertices, and let $(H_n,E)$ be the generic $K_n$-free graph (the Fraïssé limit of the class of finite $K_n$-free graphs). One can show the following:
		\begin{itemize}
			\item
			the theory of $H_n$ is $1$-G-compact, TP$_2$, NSOP$_4$ and SOP$_3$ ($1$-G-compactness follows from Proposition~\ref{prop:fraisse_gcomp}),
			\item
			the only proper reduct of $(H_n,E)$ (without parameters) is the pure set,
			\item
			given any vertex $0\in H_n$, $(H_n,E,0)$ has at most sixteen reducts (without additional parameters), all of which are $1$-G-compact (see \cite[Theorem 2.3]{Pon17} for a description of all the reducts; the G-compactness follows from the observation that, roughly speaking, the proper reducts are (up to interdefinability) isomorphic to disjoint unions of $H_n$, $H_{n-1}$, pure sets and $\{0\}$).\xqed{\lozenge}
		\end{itemize}
	\end{ex}
	
	\begin{qu}
		\label{qu:henson}
		Is the theory of $H_n$ hereditarily G-compact (or at least weakly hereditarily G-compact) for any $n\geq 3$?\xqed{\lozenge}
	\end{qu}
	
	Finally, it might be interesting to see whether there are SOP, or even linearly ordered theories which are weakly hereditarily G-compact. We have seen that the latter are not hereditarily G-compact, and it is not hard to see that e.g.\ divisible ordered abelian groups are not weakly hereditarily G-compact, but the same question for pure linear orders seems less clear.
	
	We finish with a question about some possibly interesting candidates for weakly hereditary G-compactness.
	\begin{qu}
		Are the following weakly hereditarily G-compact?
		\begin{itemize}
			\item
			the theory of dense linear orderings without endpoints,
			\item
			the theory of atomless Boolean algebras.\xqed{\lozenge}
		\end{itemize}
	\end{qu}
	
	\section*{Acknowledgements}
	I would like to thank Krzysztof Krupiński, Slavko Moconja, Itay Kaplan and Antongiulio Fornasiero for helpful discussions. I am also grateful to Nicholas Ramsey for explaining some aspects of Kim-independence in NSOP$_1$ theories.
	
	\printbibliography

\end{document}